\newtheorem{theorem}{Theorem}[section]
\newtheorem{lemma}[theorem]{Lemma}
\theoremstyle{definition}
\theoremstyle{remark}
\numberwithin{equation}{section}
\newtheorem{proposition}{Proposition}[section]
\begin{document}

\title[]
{Convergence analysis  of a symplectic  semi-discretization for stochastic NLS  equation with quadratic potential}

\author{Jialin Hong}
\address{Institute of Computational Mathematics and Scientific/Engeering Computing, AMSS, Chinese Academy of Sciences,  Beijing, 100190, P.R. China\\
           School of Mathematical Science, University of Chinese Academy
of Sciences, Beijing, 100049, China}
\curraddr{}
\email{hjl@lsec.cc.ac.cn}
\thanks{}
\author{Lijun Miao}
\address{School of Mathematics, Liaoning Normal University, Dalian,  116029, P.R. China}
\curraddr{}
\email{miaolijun@lsec.cc.ac.cn (corresponding author )}
\thanks{}
\author{Liying Zhang}
\address{Department of Mathematics, College of Sciences, China University of Mining and Technology,  Beijing 100083, China. }
\curraddr{}
\email{lyzhang@lsec.cc.ac.cn}
\thanks{}

\subjclass[2010]{60H15, 60H35, 65P10}

\keywords{stochastic nonlinear Schr\"odinger equation, quadratic potential, additive noise, stochastic  symplectic scheme, stochastic multi-symplectic scheme}

\date{\today}

\dedicatory{}

\begin{abstract}
In this paper, we investigate the convergence in probability of a stochastic symplectic
scheme for stochastic nonlinear Schr\"{o}dinger equation with quadratic potential and an additive noise. Theoretical
analysis shows that our symplectic  semi-discretization is of order one in probability under appropriate regularity conditions for the initial value and noise.  Numerical
experiments are given to simulate the long time behavior of the discrete average charge and energy as well as  the influence of the external potential and noise, and to test the convergence order.\end{abstract}

\maketitle


\section{Introduction}
\label{intro}

In this paper, we consider  the following stochastic nonlinear Schr\"{o}dinger equation with quadratic potential and additive noise
\begin{equation}\label{1.1}
\begin{split}
& \mathrm{i}d u+(\Delta u+\theta|x|^2 u+\lambda|u|^{2\sigma} u)dt=d W,
\quad (t,x)\in (0,T]\times  \mathcal{D},\\
&u(t,0)=u(t,1)=0,\;\;t\in (0,T],\\
&u(0,x)=u_0(x),\;\;x\in \mathcal{D},
\end{split}
\end{equation}
where $T\in(0,+\infty)$, $\mathcal{D}=(0,1)$, $\theta\in \mathbf{R}$,  $\lambda\neq 0$, $0<\sigma<2,$ and $W$ denotes a Wiener process expressing the random perturbations (\cite{KV1994}).  This equation models Bose--Einstein condensations 
under a 
magnetic trap when $\theta<0$, where the quadratic potential $|x|^2$ 
describes the magnetic filed whose role is to confine the movements of  particles (\cite{CR2002}). \cite{FZZ11} and \cite{MLT15} establish the well-posedness and blow up of the solution for \eqref{1.1}. The authors in \cite{MLT15}  indicate that the additive noise rather
than the potential dominates the dynamical behaviors of the solutions.  

It is known that numerical approximations have become an important tool to investigate
the behaviors of the solutions. In order to guarantee the reliability and effectiveness of numerical solutions for longtime simulations, we
expect numerical methods to preserve the intrinsic properties of the original systems
as much as possible. For Hamiltonian systems,
the symplectic schemes are shown to be superior to conventional ones especially in long time computation, artributed to their preservation of the qualitative property and the  symplectic  structure of the underlying continuous systems. 
The main goal of this work is to  analyze the convergence rate of the symplectic scheme for \eqref{1.1}.
We propose a mid-point method in  temporal direction of \eqref{1.1}  in order to preserve the properties of the original problems as much as possible and  to effectively simulate the influence of the external potential and noise on the long time behavior of the solution. It is shown that the mid-point semi-discretization is a symplectic scheme which preserves the  symplectic structure of  \eqref{1.1}.  The interested
readers are referred to \cite{BC2013} and references therein for the numerical simulation of the deterministic Schr\"odinger equation with potentials. We also refer to  \cite{CH16}  for the convergence analysis of the mid-point method  applied to the stochastic Schr\"odinger equation with Lipschitz coefficients, 
 to \cite{QC} for  the mean-square convergence of a symplectic local discontinuous Galerkin method to  stochastic linear Schr\"odinger equation with  a potential and multiplicative noise and to \cite{CA17} for the strong convergence rate of stochastic exponential method to  the stochastic linear Schr\"odinger equations with a multiplicative potential.

Furthermore,  we give the convergence order of the proposed scheme under non-Lipschitz condition. To this end, the higher regularity of the solution is needed due to the effect of semigroup. We get  the stability of solution of \eqref{1.1} in $H^4$ by means of the estimates for the high moments of charge and energy. Because the nonlinear term of \eqref{1.1} is not global Lipschitz, it is difficult to analyze the convergence order of the  symplectic scheme. Here we use  the truncated technique to get the truncated equation whose nonlinear term is global Lipschitz. Then we prove that the convergence order is one in probability for the symplectic scheme under appropriate hypothesis on initial value and noise. In addition, we simulate the long time behavior of the discrete average charge and energy under  the influence of the external potential and noise using a stochastic multi-symplectic scheme, owing to the multi-symplecticity of \eqref{1.1}. Here we cite \cite{HLMH06} and \cite{JWH13} as a partial list of the publications on
 the multi-symplectic scheme for the deterministic and stochastic Schr\"odinger equations. 
Numerical experiments present that the noise dominates the dynamics of the solution stronger than external potential.

The rest of the paper is organized as follows. Some properties of the solution, including the evolution law of charge, the uniform boundedness of energy and solution, are given in Section 2.  In Section 3, we first show that \eqref{1.1} owns the stochastic symplectic structure, then we construct
a stochastic symplectic scheme and  prove that its temporal order of
convergence is one in probability. In Section 4, we perform numerical experiments
to  test the convergence order in  Section 3, and to simulate the long time behavior of the discrete average charge and energy under the influence of the external potential and noise. In the remainder of the article, $C$ is a generic constant whose value may vary in different occurrences, $C(\cdot)$ denotes the constant depending on some parameters. 
\section{Stochastic NLS equation with quadratic potential }
\label{sec:1}
In order to state precisely Eq. \eqref{1.1},  we consider the probability space $(\Omega, \mathcal{F}, \mathbf{P})$ endowed with a normal filtration $\{\mathcal{F}_t\}_{t\geq 0}$. Let $\beta_k=\beta^1_k+\mathrm{i}\beta^2_k$ with $\beta^1_k$ and $\beta^2_k$  being its real and imaginary parts, respectively.  We assume that  $\{\beta^i_k\}_{k\in  \mathbf{N}, i=1,2}$ is a family of  real-valued independent indentified Brownian motions. Let $\{e_k\}_{k\in  \mathbf{N}}$ be an 
 orthonormal basis of some Hilbert space $U$.
 We consider the complex valued Wiener process
$$W=\sum_{k\in \mathbf{N}}\beta_k(t,\omega)\phi e_k(x),\quad t\in [0,T],\quad x\in \mathcal{D},\quad \omega \in \Omega,$$
where $\phi\in\mathcal{L}_2(U,H)$ the space of Hilbert--Schmidt operators from $U$ to  another Hilbert space $H$. The corresponding norm is then given by 
$$\| \phi \|^2_{\mathcal{L}_2 (U, H)}=tr(\phi^* \phi) =\sum_{k\in \mathbf{N}} \|\phi e_k \|^2 _{H} .$$
In addition, $ L^2(\mathcal{D})$ denotes Hilbert space with inner product $\Re\int_{\mathcal{D}}f(x)\overline{g}(x)dx$ for $f, g\in L^2(\mathcal{D})$. $\mathcal{L}_2 (U,H^s)$ is denoted by $\mathcal{L}^s_2$, where $H^s$ is Sobolev space consisting
of functions $f$ such that $\nabla^k f$ exist and are square integrable for all $k\in \{0,1,2\cdots s\}, s $ is positive integer. Throughout the paper, we assume that $\phi \in \mathcal{L}^s_2$ for a certain parameter $s$, and Sobolev space $\dot {H}^s = \{v\in  H^s;\;\; \Delta^j v=0 \;\; \text{on} \;\; \partial \mathcal{D},\;\;  \text {for} \;j\le s/2\}$  will be used . 
 
Now, we recall the mild solution of Eq. \eqref{1.1} from \cite{BD03}.

An $H_0^1$-valued $\{\mathcal{F}_t\}_{0\leq t\leq T} $-adapted process $\{u(t); t\in[0,T]\},$ is called a mild solution of \eqref{1.1} if for every $ t\in[0,T]$ holds $\mathbf{P}$-a.s.
 \begin{align}\label{1.2}
\nonumber u(t)=&S(t)u_0+\mathrm{i}\theta\int_0^t S(t-r) |x|^2 u(r)dr\\
&+\mathrm{i}\lambda\int_0^t S(t-r) |u(r)|^{2\sigma} u(r)dr-\mathrm{i}\int_0^t S(t-r)dW(r),
 \end{align}
where $S(t)=e^{\mathrm{i}t\Delta}$ denotes the semigroup of solution operator of the deterministic linear differential equation
$$ \mathrm{i}d u+\Delta udt=0~~\text{in}~~\mathcal{D},~~u=0~~\text{on}~~ \partial \mathcal{D} \times(0,T),~~u(0)=u_0~~\text{in}~~\mathcal{D}.$$

 If the noise term is eliminated in \eqref{1.1}, then it is the deterministic nonlinear Schr\"odinger equation with the quadratic potential
 \begin{align*}
\mathrm{i}d u+(\Delta u+\theta|x|^2 u+\lambda|u|^{2\sigma} u)dt=0,
\end{align*}
it possesses the charge conservation law
 \begin{align}\label{3.2}
 M(u(t))= \int_{ \mathcal{D}}|u(t,x)|^2dx=\int_{ \mathcal{D}}|u(t_0,x)|^2dx= M(u_0),
 \end{align}
and energy  conservation law
\begin{align}\label{3.390}
 H(u(t))=& \frac{1}{2}\int_{\mathcal{D}}|\nabla u(t,x)|^{2}dx -\frac{\theta}{2}\int_{\mathcal{D}}|xu(t,x)|^2dx-\frac{\lambda}{2\sigma+2}\int_{\mathcal{D}}|u(t,x)|^{2\sigma+2}dx\\
\nonumber =& H(u_0).
\end{align}

But these conservation laws of Eq. \eqref{1.1} are invalid.  Now, we state its charge evolution law and energy evolution laws, respectively.
 \begin{lemma}
   Eq. \eqref{1.1} has the following global charge evolution law a.s.
   \begin{align}\label{3.44}
 M(u(t))= M(u_0)-2\Im
 \sum_{k\in\mathbf{N}}\int_0^t\int_{\mathcal{D}}u(\overline{\phi e_k})dxd\beta_k(s)+t\|\phi\|^{2}_{\mathcal{L}^0_2},\quad\quad t\geq 0.
 \end{align}
 Moreover, for any $m\in  \mathbf{N},$ there exists a constant $C_m,$
\begin{align}\label{3.41}
\mathbf{E}\sup_{t\in[0,T]} M^m (u(t))\leq&C_m \big(\mathbf{E}(M^m (u_0))+1\big).
 \end{align}
\end{lemma}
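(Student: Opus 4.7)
To establish \eqref{3.44} and \eqref{3.41} the plan is a standard infinite-dimensional It\^o calculus argument applied first to $M(u(t))$ and then to its $m$-th power, combined with martingale and Gronwall-type estimates. Because the mild solution only takes values in $H^1_0$, I would first regularize the problem (Galerkin truncation, Yosida regularization of $\Delta$, or smoothing of the noise and initial data together with a truncation of the nonlinearity) so that the It\^o formula applies rigorously at the approximation level, derive the identities there, and then pass to the limit using the $H^1_0$ stability of the mild solution from \cite{BD03}.

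For the charge evolution law, apply the It\^o formula to the real-valued functional $F(u)=\int_{\mathcal D}|u|^2\,dx$, expanding as $d(u\bar u)=\bar u\,du+u\,d\bar u+du\cdot d\bar u$. The Hamiltonian drift contributes a term $\mathrm i\int_{\mathcal D}(\bar u\Delta u-u\Delta\bar u)\,dx$ that vanishes after two integrations by parts using the homogeneous Dirichlet boundary condition, while the real-valued potentials $\theta|x|^2$ and $\lambda|u|^{2\sigma}$ produce contributions of the form $\mathrm i V|u|^2-\mathrm i V|u|^2$ that cancel pointwise. The noise contribution $\bar u(-\mathrm i\,dW)+u(\mathrm i\,d\bar W)$ integrated over $\mathcal D$ collapses to the martingale $-2\,\Im\sum_k\bigl(\int_{\mathcal D}u\,\overline{\phi e_k}\,dx\bigr)d\beta_k$, and the It\^o correction produces $\|\phi\|^2_{\mathcal L^0_2}\,dt$ under the normalization of the complex Brownian motion adopted in the paper. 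Integration in time and passage to the limit in the regularization then yields \eqref{3.44}.

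For the moment bound \eqref{3.41}, write $M(t)=M(u_0)+N(t)+t\|\phi\|^2_{\mathcal L^0_2}$, where $N$ is the stochastic integral in \eqref{3.44}. The Cauchy--Schwarz inequality $\bigl|\int_{\mathcal D}u\,\overline{\phi e_k}\,dx\bigr|^2\le M(t)\|\phi e_k\|^2_{L^2}$ gives $d\langle N\rangle_t\le 4M(t)\|\phi\|^2_{\mathcal L^0_2}\,dt$. Applying the It\^o formula to $x\mapsto x^m$, taking supremum in time and then expectation, using the Burkholder--Davis--Gundy inequality on the martingale contribution, Young's inequality to absorb the resulting $\mathbf E\sup_{s\le t}M^m(s)$ factor into the left-hand side, and finally Gronwall's lemma will deliver \eqref{3.41}. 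The main obstacle is the rigorous justification of the It\^o formula for $M(u(t))$ at the mild-solution level, since the chain rule is not directly available for the semigroup formulation \eqref{1.2}; this is where the regularization loop has to be set up carefully, in particular so that the martingale term in \eqref{3.44} converges and the a priori bounds on the approximations are uniform in the regularization parameter. Once this point is handled, the remainder is routine martingale/Gronwall calculus.
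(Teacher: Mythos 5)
Your proposal is correct and follows essentially the same route as the paper: It\^o's formula applied to the charge functional (drift terms cancelling by integration by parts and realness of the potentials, the It\^o correction giving $t\|\phi\|^2_{\mathcal L^0_2}$), followed by It\^o on the $m$-th power, the Cauchy--Schwarz bound $\|\phi^*u\|_{L^2}^2\le \|\phi\|^2_{\mathcal L^0_2}M(u)$, a martingale (BDG) inequality, and Young's inequality to absorb the top-order term. The only cosmetic differences are that you close the moment estimate with Gronwall where the paper uses an induction on $m$, and that you flag the regularization needed to justify It\^o's formula for the mild solution, which the paper applies formally.
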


\begin{proof}
The proof is based on the application of It\^o formula to functional $M(u)=\int_{ \mathcal{D}}|u(s,x)|^2dx.$ Since $M(u)$ is Fr\'echet derivable, the derivatives of  $M(u)$ along directions $\varphi$ and $(\varphi,\psi)$ are as follows:
$$DM(u)(\varphi)=2\Re\int_{ \mathcal{D}}u\overline{\varphi}dx, \quad D^2M(u)(\varphi,\psi)=2\Re\int_{\mathcal{D}}\varphi\overline{\psi}dx.$$
 From It\^o formula, we have
 \begin{align*}
 M(u)=& M(u_0)+\int_0^tDM(u(s))du+\frac{1}{2}\int_0^t tr[ D^2M(u)(-i\phi)(-i\phi)^*]ds\\
 =&M(u_0)+2\int_0^t\Re\int_{ \mathcal{D}}u(\overline{i\Delta u+i\theta|x|^2 u+i\lambda|u|^{2\sigma} u})dxds\\
 &+2\int_0^t\Re\int_{\mathcal{D}}u(-i)dxd\overline{W(s)}+\int_0^t tr(\phi\phi^*)ds\\
 =&M(u_0)-2\Im \int_0^t\int_{\mathcal{D}}udxd\overline{W(s)}+t\|\phi\|^{2}_{\mathcal{L}^0_2}\\
 =&M(u_0)-2\Im \sum_{k\in\mathbf{N}}\int_0^t\int_{\mathcal{D}}u\overline{\phi e_k}dxd\beta_k(s)+t\|\phi\|^{2}_{\mathcal{L}^0_2}.
  \end{align*}
 To prove \eqref{3.41}, we apply It\^o formula to $M^m (u),$
  \begin{align*}
\nonumber M^m (u)=& M^m (u_0)-2m\Im
 \sum_{k\in\mathbf{N}}\int_0^t M^{m-1} (u)\int_{\mathcal{D}}u \overline{\phi e_k}dxd\beta_k(s)+m\|\phi\|^{2}_{\mathcal{L}^0_2}\int_0^t M^{m-1} (u)ds\\
&+2m(m-1)\int_0^t M^{m-2} (u) \Re \sum_{k\in\mathbf{N}}\left( \int_{\mathcal{D}}u \overline{\phi e_k}dx \right)^2 ds,\quad\quad t\geq 0.
 \end{align*}
Taking the supremum and using a martingale inequality  yields
\begin{align*}
\mathbf{E}\sup_{t\in[0,T]} M^m (u(t))\leq& \mathbf{E}M^m (u_0)+6m\mathbf{E}\bigg[\bigg(\int_0^t M^{2m-2} (u)\|\phi^*u\|^2_{L^2}\bigg)^\frac{1}{2}\bigg]\\
&+2m(m-1)\mathbf{E}\bigg[\int_0^t M^{m-2} (u) \|\phi^*u\|^2_{L^2} ds\bigg]+m\|\phi \|^{2}_{\mathcal{L}^0_2}\mathbf{E}\bigg(\int_0^t M^{m-1} (u) ds\bigg)\\
\leq& \mathbf{E}M^m (u_0)+6mT\|\phi \|^{2}_{\mathcal{L}^0_2}\mathbf{E}\big[\sup_{t\in[0,T]} M^{m-\frac12} (u(t))   \big]\\
&+m(2m-1)T\|\phi \|^{2}_{\mathcal{L}^0_2}\mathbf{E}\big[\sup_{t\in[0,T]} M^{m-1} (u(t))\big].
\end{align*}
The lemma is proved  using H\"older and Young's inequalities in the second term of the right hand side and an induction  argument.
\end{proof}

Taking expectation in both sides of \eqref{3.44}, we have that 
 \begin{align}\label{3.45}
 \mathbf{E}M(u)= \mathbf{E}M(u_0)+t\|\phi\|^{2}_{\mathcal{L}^0_2},\quad\quad t\geq 0.
 \end{align}
The formula \eqref{3.45} indicates that the average charge is linear growth with respect to time $t.$ 
Next, we present the energy evolution law of  \eqref{1.1}, which can be proved by It\^o formula too.

 \begin{lemma} 
 Eq. \eqref{1.1}
  has the following global energy evolution law a.s. 
 \begin{align}\label{3.5}
\nonumber H(u(t))=& H(u_0)+\Im\int_{ \mathcal{D}}\int_0^t(\Delta u+\theta|x|^2 u+\lambda|u|^{2\sigma} u)d\overline{W(s)}dx+\frac{1}{2}t \sum_{k\in\mathbf{N}}\|\nabla\phi e_k \|^{2}_{L^2}\\
\nonumber&- \frac{\theta}{2}t \sum_{k\in\mathbf{N}}\int_{ \mathcal{D}}|x|^2|\phi e_k |^{2}dx
 -\frac{\lambda}{2}\sum_{k\in\mathbf{N}}\int_0^t\int_{ \mathcal{D}}|u|^{2\sigma} |\phi e_k |^{2}dxds\\
 &-\sigma\lambda\sum_{k\in\mathbf{N}}\int_0^t\int_{\mathcal{D}}|u|^{2\sigma-2}(\Im(\overline{u}(\phi e_k)))^2dxds,\quad\quad t\geq 0.
 \end{align}
\end{lemma}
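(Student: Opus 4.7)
The plan is to apply It\^o's formula to the energy functional $H(u)$ along the solution of \eqref{1.1}, mirroring the derivation of \eqref{3.44}. I decompose $H=H_1+H_2+H_3$ into the kinetic, quadratic-potential and nonlinear pieces, and match each of the five terms on the right-hand side of \eqref{3.5} to a separate contribution of the It\^o expansion.

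First I would compute the Fr\'echet derivatives by viewing $\mathbb{C}$-valued functions as pairs of real functions. Integration by parts gives $DH(u)(\varphi)=-\Re\int_{\mathcal{D}}(\Delta u+\theta|x|^{2}u+\lambda|u|^{2\sigma}u)\overline{\varphi}\,dx$, while the second derivatives are $D^{2}H_{1}(u)(\varphi,\psi)=\Re\int\nabla\varphi\cdot\overline{\nabla\psi}\,dx$, $D^{2}H_{2}(u)(\varphi,\psi)=-\theta\Re\int|x|^{2}\varphi\overline{\psi}\,dx$, and, from the real Hessian of $u\mapsto|u|^{2\sigma+2}/(2\sigma+2)$, $D^{2}H_{3}(u)(\varphi,\psi)=-\lambda\int[|u|^{2\sigma}\Re(\varphi\overline{\psi})+2\sigma|u|^{2\sigma-2}\Re(u\overline{\varphi})\Re(u\overline{\psi})]\,dx$.

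Writing \eqref{1.1} in It\^o form as $du=\mathrm{i}(\Delta u+\theta|x|^{2}u+\lambda|u|^{2\sigma}u)\,dt-\mathrm{i}\,dW$, the drift contribution $DH(u)(\mathrm{i}(\Delta u+\cdots))$ equals $-\Re[\mathrm{i}\,|\Delta u+\theta|x|^{2}u+\lambda|u|^{2\sigma}u|^{2}]$, which vanishes because the modulus squared is real. The martingale term, after using $\Re(\mathrm{i}z)=-\Im z$, produces exactly $\Im\int_{\mathcal{D}}\int_{0}^{t}(\Delta u+\theta|x|^{2}u+\lambda|u|^{2\sigma}u)\,d\overline{W}(s)\,dx$. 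Setting $\varphi=\psi=-\mathrm{i}\phi e_{k}$ in each Hessian and using $|-\mathrm{i}\phi e_{k}|^{2}=|\phi e_{k}|^{2}$ together with $\Re(u\,\overline{-\mathrm{i}\phi e_{k}})=-\Im(\overline{u}\,\phi e_{k})$, the correction $\frac{1}{2}\sum_{k}D^{2}H_{i}(u)(-\mathrm{i}\phi e_{k},-\mathrm{i}\phi e_{k})$ reproduces the remaining four terms of \eqref{3.5}; the last term $-\sigma\lambda\int|u|^{2\sigma-2}(\Im(\overline{u}\phi e_{k}))^{2}\,dx$ comes specifically from the rank-one part of the Hessian of $|u|^{2\sigma+2}$.

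The main obstacle is that It\^o's formula cannot be applied directly to the mild solution: $u(t)\in H_{0}^{1}$ is not regular enough for $\Delta u$ to lie in $L^{2}$, and $H_{3}$ is not globally Lipschitz on $H_{0}^{1}$ across the full range $\sigma<2$. The standard remedy is to regularize, for instance via a Yosida-type smoothing $u^{\varepsilon}=J_{\varepsilon}u$ with $J_{\varepsilon}=(I-\varepsilon\Delta)^{-1}$ or a spectral Galerkin truncation; one applies the formal calculation to the smoothed process, where all derivatives are genuine $L^{2}$ quantities and the nonlinearity is bounded on bounded sets, and then passes to the limit $\varepsilon\to 0$ term by term. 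The limit step relies on the uniform charge bound \eqref{3.41}, $H^{1}$-convergence of $u^{\varepsilon}$ to $u$, Gagliardo--Nirenberg to control the $|u|^{2\sigma}$ and $|u|^{2\sigma-2}$ integrands, and Burkholder--Davis--Gundy for the stochastic integral.
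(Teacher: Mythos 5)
Your proposal is correct and follows exactly the route the paper intends: the paper gives no written proof, stating only that the identity ``can be proved by It\^o formula too,'' and your It\^o expansion of $H(u)$ --- vanishing drift term, martingale term $\Im\int(\Delta u+\theta|x|^2u+\lambda|u|^{2\sigma}u)\,d\overline{W}$, and the Hessian corrections evaluated at $-\mathrm{i}\phi e_k$ (including the rank-one part of the Hessian of $|u|^{2\sigma+2}$ producing the $-\sigma\lambda\int|u|^{2\sigma-2}(\Im(\overline{u}\phi e_k))^2$ term) --- reproduces all five terms of \eqref{3.5} correctly. Your additional remarks on regularization to justify the formal application of It\^o's formula go beyond what the paper records but are consistent with its framework.
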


From the definition of energy in \eqref{3.390} and Gagliardo--Nirenberg's inequality, we can conclude the following result. 

\label{3.3}
\begin{lemma} \label{4.7}
Assume that $0<\sigma<2$. There exist constants $C(\theta)$ and $C(\sigma,\lambda)$ such that\\
(i) if $\lambda<0$, then $\|\nabla u\|_{L^2}^2\leq 2H(u)+C(\theta)\|u\|_{L^2}^2,$\\
(ii) if $\lambda>0$, then $ \|\nabla u\|_{L^2}^2\leq 4H(u)+C(\sigma,\lambda)\|u\|_{L^2}^{\frac{4+2\sigma}{2-\sigma}}+C(\theta)\|u\|_{L^2}^2.$
\end{lemma}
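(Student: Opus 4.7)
The plan is to start from the explicit formula for the energy \eqref{3.390} and rearrange it as
\begin{align*}
\|\nabla u\|_{L^2}^{2}=2H(u)+\theta\int_{\mathcal{D}}|xu|^2\,dx+\frac{\lambda}{\sigma+1}\int_{\mathcal{D}}|u|^{2\sigma+2}\,dx,
\end{align*}
so that everything reduces to bounding the last two terms on the right. Since $\mathcal{D}=(0,1)$, one has $|x|\le 1$, and therefore the potential term is trivially controlled by
$\theta\int_{\mathcal{D}}|xu|^2dx\le |\theta|\|u\|_{L^2}^2=:C(\theta)\|u\|_{L^2}^2$.

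For part (i), the assumption $\lambda<0$ makes $\frac{\lambda}{\sigma+1}\int_{\mathcal{D}}|u|^{2\sigma+2}dx$ non-positive, so it can simply be dropped, which yields the claimed inequality. The only issue worth stating carefully is that in both signs of $\theta$ the constant is $|\theta|$.

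For part (ii), with $\lambda>0$, the nonlinear term must be estimated in terms of $\|\nabla u\|_{L^2}$ and $\|u\|_{L^2}$. The natural tool is the one-dimensional Gagliardo--Nirenberg inequality
\begin{align*}
\|u\|_{L^{2\sigma+2}}\le C\,\|\nabla u\|_{L^2}^{\alpha}\|u\|_{L^2}^{1-\alpha},\qquad \alpha=\frac{1}{2}-\frac{1}{2\sigma+2}=\frac{\sigma}{2(\sigma+1)},
\end{align*}
which after raising to the power $2\sigma+2$ gives
\begin{align*}
\|u\|_{L^{2\sigma+2}}^{2\sigma+2}\le C\,\|\nabla u\|_{L^2}^{\sigma}\|u\|_{L^2}^{\sigma+2}.
\end{align*}
Now apply Young's inequality with conjugate exponents $p=2/\sigma$ and $q=2/(2-\sigma)$, which is possible precisely because $0<\sigma<2$. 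Choosing the small parameter so that the $\|\nabla u\|_{L^2}^2$ contribution equals $\tfrac12$, one obtains
\begin{align*}
\tfrac{\lambda}{\sigma+1}\|u\|_{L^{2\sigma+2}}^{2\sigma+2}\le \tfrac12\|\nabla u\|_{L^2}^2 + C(\sigma,\lambda)\|u\|_{L^2}^{(\sigma+2)\cdot 2/(2-\sigma)}.
\end{align*}
Since $(\sigma+2)\cdot 2/(2-\sigma)=(4+2\sigma)/(2-\sigma)$, plugging back and absorbing the $\tfrac12\|\nabla u\|_{L^2}^2$ term to the left multiplies the remaining right-hand side by $2$, giving the constant $4$ in front of $H(u)$ and producing the stated bound.

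The routine steps (Young, absorption) carry no real difficulty; the only point that needs care is verifying the Gagliardo--Nirenberg exponent in dimension one and checking that the exponent obtained after Young's inequality is exactly $(4+2\sigma)/(2-\sigma)$ as claimed. The restriction $\sigma<2$ is essential for Young's inequality to be applicable with the conjugate exponents above, so the endpoint $\sigma=2$ is genuinely excluded.
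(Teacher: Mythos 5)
Your proof is correct and follows essentially the same route the paper intends: the authors only sketch the argument ("from the definition of energy and Gagliardo--Nirenberg's inequality"), and your rearrangement of \eqref{3.390}, the trivial bound $|x|\le 1$ on $\mathcal{D}=(0,1)$ for the potential term, and the Gagliardo--Nirenberg/Young absorption step for $\lambda>0$ (matching the exponent manipulation the paper itself uses in \eqref{2.7}) fill in exactly the intended details, with the correct exponents.
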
 
Using this lemma, we get the uniform boundedness of $H(u(t))$.
 \begin{lemma}\label{2.3}
Let $p\geq1$, $\phi\in\mathcal{L}^1_{2} $, $u_0\in H_0^1$. There exists a constant $C\equiv C(p, T, \theta, \sigma, \lambda)$ such that
 \begin{align*}
&(i)~~\sup_{0\leq t\leq T} \mathbf{E}\big(H(u(t))\big)^p\leq C,\\
&(ii)~~\mathbf{E}\sup_{0\leq t\leq T}\big(H(u(t))\big)^p\leq C.
 \end{align*}
 \end{lemma}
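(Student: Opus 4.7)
The plan is to apply It\^o's formula to the real functional $H(u(t))^p$, use the energy evolution \eqref{3.5} together with Lemma \ref{4.7} to express $\|\nabla u\|_{L^2}$ in terms of $H(u)$ and $M(u)$, absorb the nonlinear corrections using the one-dimensional Gagliardo--Nirenberg inequality, and close the argument with the charge moment bound \eqref{3.41}, Young's inequality and Gronwall's lemma.

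First I would write
\begin{align*}
H(u(t))^p = H(u_0)^p + p\int_0^t H(u)^{p-1}\,dH(u) + \tfrac{p(p-1)}{2}\int_0^t H(u)^{p-2}\,d\langle H(u)\rangle_s
\end{align*}
and substitute \eqref{3.5}. The right-hand side decomposes into the initial value (finite by $u_0\in H_0^1$), a local martingale $N_t$ with integrand $pH(u)^{p-1}\Im\!\int_\mathcal{D}(\Delta u+\theta|x|^2u+\lambda|u|^{2\sigma}u)\,d\overline W$, and drift terms. The delicate drift pieces are those involving $\int|u|^{2\sigma}|\phi e_k|^2\,dx$ and $\int|u|^{2\sigma-2}(\Im(\bar u\,\phi e_k))^2\,dx$; the pointwise inequality $(\Im(\bar u\,\phi e_k))^2\leq|u|^2|\phi e_k|^2$ reduces the second to the first. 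Since $\mathcal{D}\subset\mathbf{R}$, Gagliardo--Nirenberg gives $\|u\|_{L^\infty}^{2\sigma}\leq C\,M(u)^{\sigma/2}\|\nabla u\|_{L^2}^\sigma$, and Lemma \ref{4.7} then yields $\|\nabla u\|_{L^2}^\sigma\leq C\bigl(1+H(u)^{\sigma/2}+M(u)^{q_0}\bigr)$ for some $q_0=q_0(\sigma)$. Because $\sigma<2$ we have $\sigma/2<1$, so after multiplying by $H(u)^{p-1}$ Young's inequality produces a bound of the form $\varepsilon\,H(u)^p+C(\varepsilon)(1+M(u)^{q_1})$, the crucial point being that no power of $H(u)$ strictly above $p$ is generated.

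For part (i) I would take expectation, killing $N_t$; the remaining inequality reads $\mathbf{E}H(u(t))^p\leq C+C\int_0^t\mathbf{E}H(u(s))^p\,ds$ after using \eqref{3.41} with sufficiently large $m$ to bound $\mathbf{E}M(u)^{q_1}$, and Gronwall concludes. For part (ii) I would take the supremum in $t$ before expectation and apply Burkholder--Davis--Gundy to $\sup_{t\leq T}|N_t|$: integrating $\Delta u$ against $\overline{\phi e_k}$ by parts and invoking $\phi\in\mathcal{L}_2^1$ bounds the quadratic variation integrand by
\begin{align*}
C\|\phi\|_{\mathcal{L}_2^1}^2\bigl(\|\nabla u\|_{L^2}^2+\theta^2 M(u)+\lambda^2\|u\|_{L^\infty}^{4\sigma}M(u)\bigr)H(u)^{2p-2}.
\end{align*}
Factoring out $\sup_{[0,T]}H(u)^{p-1/2}$ and applying Young absorbs half of $\mathbf{E}\sup_{[0,T]}H(u)^p$ into the left-hand side, after which Gronwall finishes the proof. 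The principal technical obstacle is tracking the exponents of $H(u)$ generated by the nonlinear term and the It\^o correction and confirming they remain strictly subcritical; this is precisely what the constraint $\sigma<2$ combined with one-dimensional Sobolev embedding guarantees.
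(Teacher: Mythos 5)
Your proposal follows essentially the same route as the paper: It\^o's formula applied to $H(u)^p$, control of the nonlinear drift and quadratic-variation terms via Gagliardo--Nirenberg and Lemma \ref{4.7} so that only subcritical powers of $H(u)$ (plus charge moments handled by \eqref{3.41}) appear, then Young and Gronwall for (i), and a Burkholder--Davis--Gundy bound with absorption of $\tfrac12\mathbf{E}\sup_{[0,T]}H(u)^p$ for (ii). The only cosmetic differences are that the paper first treats $p=1$ separately (exploiting the favorable sign of the It\^o correction when $\lambda>0$) and interpolates through $L^{2\sigma+2}$ rather than $L^\infty$; both choices are equivalent here.
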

 \begin{proof}
 We first consider the case of $p=1$. 
If $\lambda>0,$ applying the expectation to \eqref{3.5}, we have 
\begin{align*}
 \mathbf{E}H(u(t))\leq& \mathbf{E}H(u_0)+\frac{1}{2}t \mathbf{E}\sum_{k\in\mathbf{N}}|\nabla\phi e_k |^{2}_{L^2}+ \frac{|\theta|}{2}t \mathbf{E}\sum_{k\in\mathbf{N}}\int_{ \mathcal{D}}|x|^2|\phi e_k |^{2}dx\\
\leq&\mathbf{E}H(u_0)+tC\big(|\theta|\big)\|\phi\|^2_{\mathcal{L}^1_{2}},
 \end{align*}
the assertion (i) holds.
If $\lambda<0,$ 
\begin{align*}
\mathbf{E}H(u(t))\leq& \mathbf{E}H(u_0)+\frac{1}{2}t\mathbf{E} \sum_{k\in\mathbf{N}}\|\nabla\phi e_k \|^{2}_{L^2}+ \frac{|\theta|}{2}t \mathbf{E}\sum_{k\in\mathbf{N}}\int_{ \mathcal{D}}|x|^2|\phi e_k |^{2}dx\\
 &+|\lambda|\mathbf{E}\sum_{k\in\mathbf{N}}\int_0^t\int_{ \mathcal{D}}\bigg(\frac{1}{2}|u|^{2\sigma} |\phi e_k |^{2}+\sigma|u|^{2\sigma-2}\big(\Im(\overline{u}(\phi e_k))\big)^2\bigg)dxds\\
\leq& \mathbf{E}H(u_0)+tC\big(|\theta|\big)\|\phi\|^2_{\mathcal{L}^1_{2}}+|\lambda|\frac{1+2\sigma}{2}\mathbf{E}\sum_{k\in\mathbf{N}}\int_0^t\int_{ \mathcal{D}} |u|^{2\sigma}|\phi e_k |^{2}dxds.
 \end{align*}
Since H\"older inequality, Young's inequality and Gagliardo--Nirenberg's inequality,  we have
\begin{align*}
\sum_{k\in\mathbf{N}}\int_{ \mathcal{D}}|u|^{2\sigma}|\phi e_k |^{2}dx
&\leq \|u\|^{2\sigma}_{L^{2\sigma+2}}\|\phi \|^2_{\mathcal{L}_2(U,L^{2\sigma+2})}
\leq\frac{\sigma}{\sigma+1} \|u\|^{2\sigma+2}_{L^{2\sigma+2}}+\frac{1}{\sigma+1}\|\phi \|^{2\sigma+2}_{\mathcal{L}_2(U,L^{2\sigma+2})}\\
&\leq \frac{\sigma}{2}\|\nabla u\|^2_{L^{2}}+C(\sigma)\|u\|_{L^{2}}^{\frac{4+2\sigma}{2-\sigma}}+\frac{1}{\sigma+1}\|\phi \|^{2\sigma+2}_{\mathcal{L}_2(U,L^{2\sigma+2})}\\
&\leq \sigma H(u)+C(\theta)\|u\|_{L^2}^2+C\|u\|_{L^{2}}^{\frac{4+2\sigma}{2-\sigma}}+\frac{1}{\sigma+1}\|\phi \|^{2\sigma+2}_{\mathcal{L}^1_2},
 \end{align*}
 the last inequality follows from Lemma \ref{4.7} and $H^1$ is embedded into $L^{2\sigma+2}$. Therefore, 
$$ \mathbf{E}H(u(t))\leq Ct+C\mathbf{E}\int_0^t H(u(s))ds.$$
Then we use Gronwall's inequality to get the assertion (i).

If $p\geq 2,$ we apply It\^{o} formula to $\big(H(u)\big)^p,$ then
 \begin{align}\label{2.4}
\nonumber \big(H(u)\big)^p=&\big(H(u_0)\big)^p+\Im \int_\mathcal{D} \int_0^t p\big(H(u)\big)^{p-1}\big(\Delta u+\theta|x|^2 u+\lambda|u|^{2\sigma} u\big)d\overline{W}dx\\
\nonumber &+\frac{1}{2}\int_0^t p(p-1)\big(H(u)\big)^{p-2}\sum_{k\in\mathbf{N}}\left(\Im\int_\mathcal{D}\big(\Delta u+\theta|x|^2 u+\lambda|u|^{2\sigma} u\big) \overline{\phi e_k} dx\right)^2ds\\
\nonumber&+\frac{1}{2}\int_0^t p\big(H(u)\big)^{p-1} \bigg ( 
\sum_{k\in\mathbf{N}}\|\nabla\phi e_k \|^{2}_{L^2}- \theta\sum_{k\in\mathbf{N}}\int_{ \mathcal{D}}|x|^2|\phi e_k |^{2}dx\\
 & -\lambda\sum_{k\in\mathbf{N}}\int_{ \mathcal{D}}|u|^{2\sigma} |\phi e_k |^{2}dx
 -2\sigma\lambda\sum_{k\in\mathbf{N}}\int_{\mathcal{D}}|u|^{2\sigma-2}\big(\Im(\overline{u}(\phi e_k))\big)^2dx\bigg)ds.
 \end{align}
 Since the second term on the right-hand side vanishes after taking expectation, there remains to estimate the third term and the last term. For the third term, there exists a constant $C$ such that
\begin{align}\label{2.5}
&\sum_{k\in\mathbf{N}}\left( \Im\int_{\mathcal{D}}\big((\Delta u+\theta|x|^2 u+\lambda|u|^{2\sigma} u) \overline{\phi e_k} \big)dx\right)^2
\leq C\|\phi^*\big(\Delta u+\theta|x|^2 u+\lambda|u|^{2\sigma} u\big) \|^2_{L^2}.
 \end{align}
 The operator $\phi^*$ is bounded from $H^{-1}$ into $L^2$. Furthermore, $H^{1}$ is embedded into $L^{2\sigma+2}$, $\phi^*$ is also bounded from $L^{\frac{2\sigma+2}{2\sigma+1}}$ into $L^2$ and   Young's inequality, so we obtain
 \begin{align}\label{2.6}
\nonumber  &\|\phi^*\big(\Delta u+\theta|x|^2 u+\lambda|u|^{2\sigma} u\big) \|^2_{L^2}
\leq \|\phi\|^2_{\mathcal{L}^1_{2}}\big(\|\nabla u\|_{L^{2}}+|\theta||x|^2\|u\|_{L^2}+|\lambda|\|u\|^{2\sigma+1}_{L^{2\sigma+2}}\big)^2\\
&\nonumber \leq   3\|\phi\|^2_{\mathcal{L}^1_{2} }\big(\|\nabla u\|^2_{L^{2}}+\theta^2|x|^4\|u\|^2_{L^2}+\lambda^2\|u\|^{4\sigma+2}_{L^{2\sigma+2}}\big)\\
&\leq3\big(\|\phi\|^4_{\mathcal{L}^1_{2} }+\frac{\lambda^2}{2\sigma+2}\|\phi\|^{4\sigma+4}_{\mathcal{L}^1_{2} }+\frac{1}{2}\|\nabla u\|^4_{L^{2}}+\frac{\theta^4|x|^8}{2}\|u\|^4_{L^2}+\frac{\lambda^2(2\sigma+1)}{(2\sigma+2)}\|u\|^{4\sigma+4}_{L^{2\sigma+2}}\big).
 \end{align}
 From Gagliardo--Nirenberg's inequality and $0<\sigma<2,$
\begin{align}\label{2.7}
\|u\|^{4\sigma+4}_{L^{2\sigma+2}}\leq  C\|\nabla u\|^{2\sigma}_{L^{2}}\|u\|^{2\sigma+4}_{L^2}\leq C(\sigma)\big( \|\nabla u\|^{4}_{L^{2}}+\|u\|^{\frac{4\sigma+8}{2-\sigma}}_{L^2}\big).
 \end{align}
Substituting \eqref{2.7} into  \eqref{2.6}, and combining with \eqref{3.41} and Lemma \ref{4.7}, we have
\begin{align}\label{2.8}
\nonumber&\sum_{k\in\mathbf{N}}\left( \Im\int_{\mathcal{D}}\big(\Delta u+\theta|x|^2 u+\lambda|u|^{2\sigma} u\big) \overline{\phi e_k} dx\right)^2\\
\nonumber&\leq C(|\theta|, |\lambda|, \sigma)\bigg(\|\phi\|^4_{\mathcal{L}^1_{2}}+\|\phi\|^{4\sigma+4}_{\mathcal{L}^1_{2} }+\|\nabla u\|^4_{L^{2}}+\|u_0\|^4_{L^2}+\|u_0\|^{\frac{4\sigma+8}{2-\sigma}}_{L^2}\bigg)\\
&\leq C(|\theta|, |\lambda|, \sigma)\bigg(\big(H(u)\big)^2+ \|\phi\|^4_{\mathcal{L}^1_{2} }+\|\phi\|^{4\sigma+4}_{\mathcal{L}^1_{2} }+\|u_0\|^4_{L^2}+\|u_0\|^{\frac{4\sigma+8}{2-\sigma}}_{L^2}\bigg).
 \end{align}
For the last term in \eqref{2.4}, due to Young's inequality, Lemma \ref{4.7} and $H^{1}$ is embedded into $L^{2\sigma+2},$ we get\begin{align}\label{2.9}
\nonumber&\sum_{k\in\mathbf{N}}\|\nabla\phi e_k \|^{2}_{L^2}- \theta\sum_{k\in\mathbf{N}}\int_{ \mathcal{D}}|x|^2|\phi e_k |^{2}dx\\
\nonumber&-\lambda\sum_{k\in\mathbf{N}}\int_{ \mathcal{D}}|u|^{2\sigma} |\phi e_k |^{2}dx
 -2\sigma\lambda\sum_{k\in\mathbf{N}}\int_{\mathcal{D}}|u|^{2\sigma-2}\big(\Im(\overline{u}(\phi e_k))\big)^2dx\\
 \nonumber&\leq \|\phi\|^2_{\mathcal{L}^1_{2}}+C(|\theta|) \|\phi\|^2_{\mathcal{L}^1_{2}}+|\lambda|(2\sigma+1)\|u\|^{2\sigma}_{L^{2\sigma+2}}\|\phi\|^{2}_{L^{2\sigma+2}}\\
 &\leq C(|\theta|, |\lambda|, \sigma)\bigg(H(u)+\|\phi\|^2_{\mathcal{L}^1_{2}}+ \|\phi\|^{2\sigma+2}_{\mathcal{L}^1_{2}}+\|u_0\|^2_{L^2}+\|u_0\|^{\frac{2\sigma+4}{2-\sigma}}_{L^2}\bigg).
  \end{align}
  
Because of  \eqref{2.8}, \eqref{2.9}, and H\"older inequality, we deduce
\begin{align*}
 \mathbf{E}\big(H(u(t))\big)^p\leq Ct+C \mathbf{E}\int_0^{t}\big(H(u(s))\big)^pds,
\end{align*}
we apply Gronwall's inequality to obtain the estimate (i). 

To show the assertion (ii) for $p=1,$ we take the supremum over $t\in [0,T]$ in (3)
before taking the expectation. The main difference is
the appearance of the supremum of a stochastic integral compared to assertion (i).  This term can be estimated by a martingale inequality,
\begin{align*}
&\mathbf{E}\sup_{0\leq t\leq T}\bigg(\Im\int_{ \mathcal{D}}\int_0^t(\Delta u+\theta|x|^2 u+\lambda|u|^{2\sigma} u)d\overline{W(s)}dx)\bigg)\\
&\leq3\mathbf{E}
\bigg[\bigg(\int_0^T\|\phi^*(\Delta u+\theta|x|^2 u+\lambda|u|^{2\sigma} u)\|^2_{L^2} ds \bigg)^\frac{1}{2}\bigg]\\
&\leq C(\lambda, \theta, \|\phi\|_{\mathcal{L}^1_{2} })\mathbf{E}\bigg[\bigg(\int_0^T(\|\nabla u\|^2_{L^2}+ \|u\|^2_{L^2}+ \|u\|^{4\sigma+2}_{L^{2\sigma+2}} ) ds \bigg)^\frac{1}{2}\bigg]\\
&\leq C(T,\lambda, \theta, \|\phi\|_{\mathcal{L}^1_{2} })+\frac{1}{2}\mathbf{E}\big(\sup_{0\leq t\leq T}H(u(t)) \big).
\end{align*}
The assertion (ii) for $p \geq2$ uses arguments similar to the above estimate, so we skip the details here.
 \end{proof}
 
In \cite{BD03}, Theorem 4.6, a uniform boundedness for the energy is
used to construct a unique mild solution with continuous $H^1(\mathbf{R}^d)$-valued paths for stochastic nonlinear Schr\"odinger equation.
 We can follow the same strategy to construct the unique global mild
solution with continuous $H^1
(\mathcal{D})$-valued paths using Lemma \ref{2.3}.
Moreover,  we can get the same result in $H^4
(\mathcal{D})$  and have the following uniform boundedness of  the solution under $\sigma=1$. Similar to Theorem 2.1 of \cite{CHL16}, 
 The proof of the  stability of solution in Sobolev space $H^4$ is directly obtained by analyzing the functional 
\begin{align*}
f(u)=\|\nabla^{4}u\|^2_{L^2}-\lambda \Re\int_{\mathcal{D}}\big((-\Delta)^{3}u\big)(|u|^{2}\bar{u})dx.
\end{align*}
\begin{lemma}
Let $p\geq1$, $\sigma=1$, $u_0 \in  \dot{H}^4$ 
   and  $\phi \in\mathcal{ L}^4_2$.   There exists a constant $C=C(p, T,  u_0, \phi, \theta, \lambda, \sigma)$ such that 
\begin{align*}
\mathbf{E} \sup_{0\leq t\leq T}\|u \|^{2p}_{H^4}\leq C.
\end{align*}
\end{lemma}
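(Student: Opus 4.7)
The plan is to apply It\^o's formula to the functional $f(u)^p$, derive a Gronwall-type inequality for $\mathbf{E}\sup_{0 \leq t \leq T} f(u(t))^p$, and then recover the $H^4$ bound by comparing $f$ with $\|\cdot\|_{H^4}^2$ modulo lower-order Sobolev norms already controlled by Lemma \ref{2.3}.

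First I would establish that $f(u)$ and $\|\nabla^4 u\|_{L^2}^2$ differ by a lower-order perturbation. Using H\"older's inequality, the Gagliardo--Nirenberg inequality and the one-dimensional embedding $H^1 \hookrightarrow L^\infty$, one obtains
$$\Bigl| \lambda \Re \int_{\mathcal{D}} ((-\Delta)^3 u)(|u|^2 \bar u)\,dx \Bigr| \leq \tfrac{1}{4} \|\nabla^4 u\|_{L^2}^2 + C(\|u\|_{H^1}),$$
so $f(u) \geq \tfrac{1}{2} \|\nabla^4 u\|_{L^2}^2 - C(\|u\|_{H^1})$, and conversely $f(u)^p$ is controlled by a polynomial expression in $\|u\|_{H^4}$ and $\|u\|_{H^1}$. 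Hence a uniform bound on $\mathbf{E}\sup_t f(u(t))^p$, combined with Lemmas 2.1 and \ref{2.3}, yields the claim.

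Next I would apply It\^o's formula to $f(u(t))^p$ using $du = \mathrm{i}(\Delta u + \theta|x|^2 u + \lambda|u|^2 u)\,dt - \mathrm{i}\,dW$. The drift of $f(u)$ splits according to the three Hamiltonian pieces. \textbf{(i)} The $\mathrm{i}\Delta u$ contribution to $\|\nabla^4 u\|_{L^2}^2$ vanishes, since integration by parts with the $\dot H^4$ boundary conditions turns it into the imaginary part of a real quantity; its contribution to the counter-term survives and will be used for cancellation below. \textbf{(ii)} The potential contribution $Df(u)(\mathrm{i}\theta|x|^2 u)$, expanded by Leibniz, is bounded by $C(|\theta|)(1 + f(u))$ since $|x|^2$ is smooth on $\overline{\mathcal{D}}$ and interpolation $\|u\|_{H^k} \leq \|u\|_{H^1}^{1-k/4} \|u\|_{H^4}^{k/4}$ absorbs the intermediate norms after using Lemma \ref{2.3}. \textbf{(iii)} The cubic contribution $2\Re\int \nabla^4 u \cdot \overline{\nabla^4(\mathrm{i}\lambda|u|^2 u)}\,dx$ contains a non-skew-symmetric obstruction of the form $\Im \int u^2 (\nabla^4 u)^2\,dx$, which is cancelled exactly by the $\mathrm{i}\Delta u$ contribution of the counter-term; the leftover pieces involve at most three derivatives falling on $u$ and are controlled by $C(1 + f(u))(1 + \|u\|_{H^3}^2)$ via H\"older, Sobolev embedding and Young's inequality, then by $C(1 + f(u))$ after interpolation.

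The It\^o correction from the additive noise contributes a multiple of $\|\phi\|_{\mathcal{L}^4_2}^2$ together with polynomial expressions in $f(u)$, bounded analogously. Taking the supremum over $t$ and applying the Burkholder--Davis--Gundy inequality to the martingale term produces an extra contribution of the form $\tfrac{1}{2}\mathbf{E}\sup_s f(u(s))^p$, which is absorbed into the left-hand side. After these reductions one arrives at
$$\mathbf{E}\sup_{0 \leq s \leq t} f(u(s))^p \leq C + C\int_0^t \mathbf{E}\sup_{0 \leq r \leq s} f(u(r))^p\,ds,$$
and Gronwall's inequality closes the estimate. The hardest step is (iii): verifying by explicit integration by parts that the counter-term $-\lambda \Re \int ((-\Delta)^3 u)(|u|^2 \bar u)\,dx$ annihilates the non-skew-symmetric obstruction coming from $\nabla^4(|u|^2 u)$, so that the surviving drift never involves $\|\nabla^5 u\|_{L^2}$ or higher derivatives. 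The restriction $\sigma = 1$ is essential here, as the algebraic form of $|u|^2 u$ is what enables the cancellation with the cubic counter-term.
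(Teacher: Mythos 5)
Your proposal follows exactly the route the paper intends: the paper's entire ``proof'' consists of naming the modified functional $f(u)=\|\nabla^{4}u\|^2_{L^2}-\lambda \Re\int_{\mathcal{D}}((-\Delta)^{3}u)(|u|^{2}\bar{u})\,dx$ and citing Theorem 2.1 of \cite{CHL16}, and you have correctly reconstructed the intended argument, including the key cancellation of the $\Im\int \bar u^2(\nabla^4 u)^2\,dx$ obstruction by the $\mathrm{i}\Delta u$ contribution of the counter-term. The only place to be careful is your final reduction to ``$C(1+f(u))$ after interpolation'': the interpolation leaves random factors of $\|u\|_{H^1}$, so to obtain a deterministic Gronwall constant you must check that the surviving drift terms carry a power of $\|u\|_{H^4}$ strictly less than $2$ (which the one-dimensional Gagliardo--Nirenberg inequalities do give), apply Young's inequality pathwise to split off $\epsilon\|u\|_{H^4}^2$ plus a moment-bounded power of $\|u\|_{H^1}$, and only then take expectations.
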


\section{Stochastic symplectic scheme}
\label{sec:2}
As we all know, the stochastic Schr\"odinger equation without  quadratic potential is an infinite-dimensional stochastic Hamiltonian system, which characterize the geometric invariants of the phase flow and contribute to constructing the numerical schemes for long time computation.
In the following,  we show that Eq. \eqref{1.1} possesses stochastic symplectic structure.

Denote by $p$ and $q$ the real and imaginary parts of $u$, respectively. Let $\hat{W}=\sum\limits_{k\in \mathbf{N}}\beta_k(t,\omega)e_k(x)$ be a cylindrical Wiener process with $\hat{W}_1$ and $\hat{W}_2$ being its real and imaginary parts. Then the Wiener process $W=\phi \hat{W}_1+ \mathrm{i}\phi \hat{W}_2=:W_1+\mathrm{i}W_2$. Then Eq. \eqref{1.1} is equivalent to 
 \begin{align}\label{3.61}
\nonumber&d_t p+\Delta qdt+\theta|x|^{2} qdt+\lambda|p^2+q^2|^{\sigma} qdt=dW_2,\\
&d_t q-\Delta pdt-\theta|x|^{2} pdt-\lambda|p^2+q^2|^{\sigma} pdt=-d W_1
 \end{align}
with initial datum $(p(0),q(0))=(p_0,q_0).$
 Set 
$$H_1=- \frac{1}{2}\int_{\mathcal{D}}|\nabla u|^{2}dx +\frac{\theta}{2}\int_{ \mathcal{D}}|xu|^2dx+\frac{\lambda}{2\sigma+2}\int_{ \mathcal{D}}|u|^{2\sigma+2}dx.$$
Then the equation \eqref{3.61} can be rewritten as
 \begin{align}\label{4.1}
\nonumber&d_t p=-\frac{\delta H_1}{\delta q}dt+dW_2,\\
&d_t q=\frac{\delta H_1}{\delta p}dt-dW_1,
 \end{align}
where $\frac{\delta H_1}{\delta q} $ and $ \frac{\delta H_1}{\delta p}$ denote the variational derivative of $H_1$ with respect to $p$ and $q$, respectively.
In fact, \eqref{4.1} is an infinite-dimensional stochastic Hamiltonian system. Using the same procedure as \cite{CH16}, one can derive that \eqref{4.1} possesses the symplectic structure
 \begin{align}\label{4.2}
 \overline{\omega}(t):=\int_{x_0}^{x_1}dp\wedge dq dx.
  \end{align}

\begin{theorem}
The phase flow of Eq. \eqref{1.1} preserves the symplectic structure \eqref{4.2}.
\end{theorem}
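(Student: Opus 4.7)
The strategy follows the standard template for additive-noise stochastic Hamiltonian systems, now at the infinite-dimensional level. The decisive observation is that, since $W_1$ and $W_2$ do not depend on $(p,q)$, applying the exterior differential $d$ (with respect to the initial data $(p_0,q_0)$) to both sides of \eqref{4.1} kills the stochastic term altogether. Consequently, the variational $1$-forms $dp$ and $dq$ satisfy a \emph{pathwise deterministic} linear equation in $t$, and no It\^o correction contaminates the evolution of the $2$-form $dp\wedge dq$.

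Concretely, I would first linearise \eqref{4.1} to obtain
\begin{align*}
d_t(dp) &= -\frac{\delta^2 H_1}{\delta p\,\delta q}\,dp\,dt - \frac{\delta^2 H_1}{\delta q^2}\,dq\,dt,\\
d_t(dq) &= \frac{\delta^2 H_1}{\delta p^2}\,dp\,dt + \frac{\delta^2 H_1}{\delta p\,\delta q}\,dq\,dt,
\end{align*}
then form the wedge product by the ordinary Leibniz rule (valid because both expressions are of bounded variation in $t$) and exploit $dp\wedge dp = dq\wedge dq = 0$ to get
\begin{equation*}
d_t(dp\wedge dq) = \Bigl(\frac{\delta^2 H_1}{\delta p\,\delta q} - \frac{\delta^2 H_1}{\delta q\,\delta p}\Bigr)\, dp\wedge dq\,dt = 0,
\end{equation*}
the last equality being the symmetry of the second variational derivative of $H_1$, which after integration in $x$ against the Dirichlet boundary condition on $\partial\mathcal{D}$ is the self-adjointness of the Hessian of $H_1$ on $L^2(\mathcal{D})\times L^2(\mathcal{D})$. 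Integrating in $x\in(x_0,x_1)$ and then in $t$ gives $\overline{\omega}(t) = \overline{\omega}(0)$ almost surely, which is the claimed preservation of \eqref{4.2}.

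The main obstacle is rigor in infinite dimensions: the operators $\Delta$ and $\theta|x|^2$, and especially the locally Lipschitz nonlinearity $\lambda|u|^{2\sigma}u$, are all unbounded, so the formal variational calculus above needs a functional-analytic frame. I would circumvent this by performing the computation on a spectral Galerkin truncation of \eqref{1.1}, where all objects are matrices and the identity is a finite-dimensional exercise in linear algebra, and then passing to the limit using the a priori estimates already assembled in Section 2 (in particular Lemma \ref{2.3} together with the $H^4$-bound established above) and the continuous dependence of the flow on the initial data. Because the noise is additive, no Stratonovich correction arises at any stage of this procedure, so the symplecticity proved at the Galerkin level survives the passage to the limit.
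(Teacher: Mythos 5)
Your argument is correct and is essentially the route the paper itself takes: the paper prints no proof for this theorem, deferring to the ``same procedure as \cite{CH16}'', which is exactly the variational (exterior--differential) computation you outline --- the additive noise is annihilated by $d$, the variational one-forms satisfy a pathwise linear Hamiltonian system, and the integrated two-form is conserved by the symmetry/self-adjointness of the Hessian of $H_1$. One small caution: the cancellation of the diagonal terms $\frac{\delta^2 H_1}{\delta q^2}\,dq\wedge dq$ and $dp\wedge\frac{\delta^2 H_1}{\delta p^2}\,dp$ is not pointwise (the Hessian contains $\Delta$, so $(\Delta\,dq)\wedge dq\neq 0$ at a fixed $x$); it only holds after the integration over $x$ with the Dirichlet boundary condition, as you yourself indicate, so the computation should be phrased for $\overline{\omega}(t)$ directly rather than for the density $dp\wedge dq$.
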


In order to preserve the stochastic symplectic structure,
 we consider the mid-point  scheme of the temporal discretization for \eqref{1.1},
\begin{align}\label{4.4}
\mathrm{i}\frac{u^{n+1}-u^{n}}{\tau}+\Delta u^{n+\frac12}+\theta|x|^2 u^{n+\frac12}+\lambda|u^{n+\frac12}|^{2\sigma}u^{n+\frac12}=\frac{\triangle_n W}{\tau},\quad n=0, 1,\cdots,N-1,
\end{align}
where $\tau=\frac{T}{N}$ is time step size, $\triangle_n W=W(t_{n+1})-W(t_n)$,  $u^{n+\frac12}=\frac12(u^n+u^{n+1}).$  Denote
\begin{align*}
\overline{\omega}^{n}:=\int_{x_0}^{x_1}dp^{n} \wedge dq^{n} dx,
\end{align*}
we have the following result.
\begin{theorem}
The scheme \eqref{4.4} possesses the discrete symplectic structure, i.e.
\begin{align*}
\overline{\omega}^{n+1}=\overline{\omega}^{n}.
\end{align*}
\end{theorem}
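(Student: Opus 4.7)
The plan is to reduce the statement to the classical fact that the implicit mid-point rule preserves the symplectic two-form of a Hamiltonian system with additive driving, adapted to the infinite-dimensional phase space associated with \eqref{4.1}. First, I decompose $u^n=p^n+\mathrm{i}q^n$ and $\triangle_n W=\triangle_n W_1+\mathrm{i}\triangle_n W_2$, so that \eqref{4.4} becomes the real system
\begin{align*}
p^{n+1}&=p^n-\tau\,\frac{\delta H_1}{\delta q}(u^{n+1/2})+\triangle_n W_2,\\
q^{n+1}&=q^n+\tau\,\frac{\delta H_1}{\delta p}(u^{n+1/2})-\triangle_n W_1,
\end{align*}
which is exactly the mid-point discretization of \eqref{4.1} with $u^{n+1/2}=\tfrac12(u^n+u^{n+1})$. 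Next, I apply the exterior differential $d$ (taken with respect to the initial data $(p^0,q^0)$) to both equations. Since the Wiener increments do not depend on $(p^0,q^0)$, one has $d(\triangle_n W_j)=0$, and therefore
\begin{align*}
dp^{n+1}-dp^n&=-\tau\bigl(H_{qp}[dp^{n+1/2}]+H_{qq}[dq^{n+1/2}]\bigr),\\
dq^{n+1}-dq^n&=\phantom{-}\tau\bigl(H_{pp}[dp^{n+1/2}]+H_{pq}[dq^{n+1/2}]\bigr),
\end{align*}
where $H_{\alpha\beta}$ denotes the second variational derivative $\tfrac{\delta^2 H_1}{\delta\alpha\,\delta\beta}$ evaluated at $u^{n+1/2}$.

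The elementary algebraic identity
\begin{align*}
dp^{n+1}\wedge dq^{n+1}-dp^n\wedge dq^n=(dp^{n+1}-dp^n)\wedge dq^{n+1/2}+dp^{n+1/2}\wedge(dq^{n+1}-dq^n),
\end{align*}
which follows by direct expansion from $dp^{n+1/2}=\tfrac12(dp^n+dp^{n+1})$ and its $q$-analogue, then expresses $\overline\omega^{n+1}-\overline\omega^n$ as the $\mathcal D$-integral of four terms after substitution. The two diagonal contributions, involving $H_{pp}$ and $H_{qq}$, vanish: the multiplication parts $\theta|x|^2$ and $\lambda(p^2+q^2)^\sigma|_{u^{n+1/2}}$ produce the pointwise wedge of a one-form with itself, and the Laplacian parts are removed by integration by parts using the Dirichlet boundary condition $dp^{n+1/2}=dq^{n+1/2}=0$ on $\partial\mathcal D$, reducing to $\int_{\mathcal D}\nabla dp^{n+1/2}\wedge\nabla dp^{n+1/2}\,dx=0$ (and its $q$-analogue). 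The two off-diagonal contributions collapse to
\begin{align*}
-\tau\int_{\mathcal D}H_{qp}[dp^{n+1/2}]\wedge dq^{n+1/2}\,dx+\tau\int_{\mathcal D}dp^{n+1/2}\wedge H_{pq}[dq^{n+1/2}]\,dx,
\end{align*}
and cancel because $H_{pq}$ and $H_{qp}$ are the same multiplication operator $2\sigma\lambda(p^2+q^2)^{\sigma-1}pq|_{u^{n+1/2}}$, i.e.\ the mixed second variation of $H_1$ is symmetric.

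The main technical obstacle is the infinite-dimensional bookkeeping rather than the algebra: the implicit scheme must first be shown to produce a well-defined (hence differentiable) map $u^n\mapsto u^{n+1}$ so that the exterior differential $d$ in $(p^0,q^0)$ makes sense; the interchange of $\int_{\mathcal D}\cdot\,dx$ with $d$ and the integration by parts on the Laplacian term must be justified by the available Sobolev regularity of $u^{n+1/2}$; and when $\sigma<1$ the local singularity of $H_{pq}$ at $u^{n+1/2}=0$ requires minor care, although it does not affect the identity since the relevant integrand is still integrable and the pointwise wedge-cancellation argument remains valid almost everywhere. Once these formal manipulations are legitimized, the four cancellations displayed above immediately give $\overline\omega^{n+1}=\overline\omega^n$.
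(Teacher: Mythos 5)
Your proof is correct and follows essentially the same route as the paper: write the mid-point scheme in the real coordinates $(p,q)$, apply the exterior differential (under which the Wiener increments drop out), and verify that the resulting wedge-product terms cancel using the symmetry of the second variation of $H_1$. Your polarization identity and your explicit handling of the Laplacian term by integration by parts under the Dirichlet condition are simply a cleaner organization of the direct expansion-and-substitution computation that the paper carries out.
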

\begin{proof}
For convenience, denote $\Phi(p, q)=\frac{1}{2\sigma+2}(p^2+q^2)^{\sigma+1},$ then  \eqref{4.4} can be rewritten as 
 \begin{align*}
\nonumber& p^{n+1}=p^{n}-\Delta q^{n+\frac12}\tau-\theta|x|^2 q^{n+\frac12} \tau -\lambda\frac{\partial \Phi}{\partial q}{(p^{n+\frac12}, q^{n+\frac12})} \tau+\triangle_n W_2,\\
&q^{n+1}=q^{n}+\Delta p^{n+\frac12}\tau+\theta|x|^2 p^{n+\frac12} \tau +\lambda\frac{\partial \Phi}{\partial p}{(p^{n+\frac12}, q^{n+\frac12})} \tau -\triangle_n W_1.
 \end{align*}
 Differentiating the above equation on the phase space, we obtain
  \begin{align*}
\nonumber& dp^{n+1}=dp^{n}-\Delta dq^{n+\frac12}\tau-\theta|x|^2 dq^{n+\frac12} \tau -\lambda\frac{\partial^2 \Phi}{\partial q\partial p}{dp^{n+\frac12}} \tau-\lambda\frac{\partial^2 \Phi}{\partial q^2}{dq^{n+\frac12}} \tau,\\
&dq^{n+1}=dq^{n}+\Delta dp^{n+\frac12}\tau+\theta|x|^2 dp^{n+\frac12} \tau +\lambda\frac{\partial^2 \Phi}{\partial p\partial q}{dq^{n+\frac12}} \tau+\lambda\frac{\partial^2 \Phi}{\partial p^2}{dp^{n+\frac12}} \tau.
 \end{align*}
Then
\begin{align*}
 &dp^{n+1}\wedge dq^{n+1}\\
 &=dp^{n}\wedge dq^{n}
 +\bigg(-\Delta dq^{n+\frac12}\tau-\theta|x|^2 dq^{n+\frac12} \tau -\lambda\frac{\partial^2 \Phi}{\partial q\partial p}{dp^{n+\frac12}} \tau-\lambda\frac{\partial^2 \Phi}{\partial q^2}{dq^{n+\frac12}} \tau\bigg)\wedge dq^{n}\\
&+dp^{n}\wedge\bigg(\Delta dp^{n+\frac12}\tau+\theta|x|^2 dp^{n+\frac12} \tau +\lambda\frac{\partial^2 \Phi}{\partial p\partial q}{dq^{n+\frac12}} \tau+\lambda\frac{\partial^2 \Phi}{\partial p^2}{dp^{n+\frac12}} \tau\bigg)\\
&+\bigg(-\Delta dq^{n+\frac12}\tau-\theta|x|^2 dq^{n+\frac12} \tau -\lambda\frac{\partial^2 \Phi}{\partial q\partial p}{dp^{n+\frac12}} \tau-\lambda\frac{\partial^2 \Phi}{\partial q^2}{dq^{n+\frac12}} \tau\bigg)\\
&\wedge\bigg(\Delta dp^{n+\frac12}\tau+\theta|x|^2 dp^{n+\frac12} \tau +\lambda\frac{\partial^2 \Phi}{\partial p\partial q}{dq^{n+\frac12}} \tau+\lambda\frac{\partial^2 \Phi}{\partial p^2}{dp^{n+\frac12}} \tau\bigg).
 \end{align*}
Substituting 
 \begin{align*}
\nonumber& dp^{n}=dp^{n+\frac12}+\Delta dq^{n+\frac12}\tau+\theta|x|^2 dq^{n+\frac12} \tau +\lambda\frac{\partial^2 \Phi}{\partial q\partial p}{dp^{n+\frac12}} \tau+\lambda\frac{\partial^2 \Phi}{\partial q^2}{dq^{n+\frac12}} \tau,\\
&dq^{n}=dq^{n+\frac12}-\Delta dp^{n+\frac12}\tau-\theta|x|^2 dp^{n+\frac12} \tau -\lambda\frac{\partial^2 \Phi}{\partial p\partial q}{dq^{n+\frac12}} \tau-\lambda\frac{\partial^2 \Phi}{\partial p^2}{dp^{n+\frac12}} \tau
 \end{align*}
into the above equality, we have
$$\int_{x_0}^{x_1}dp^{n+1}\wedge dq^{n+1}dx=\int_{x_0}^{x_1}dp^{n}\wedge dq^{n}dx,~~a.s.$$
\end{proof}

In fact,  we can prove that the scheme  \eqref{4.4} exists a numerical solution. 
\begin{proposition} 
Let $\phi \in \mathcal{L}^4_2,$ and $u_0$ be $\mathcal{F} _{0}$-measurable with values in $H_0^1$, then for sufficiently small $\tau$, there exists an $H_0^1(\mathcal{D})$-valued $\{\mathcal{F} _{t_n}\}_{0\leq n \leq N}$-adapted solution $\{u^n; 0\leq n \leq N\}$ of  \eqref{4.4} . 
\end{proposition}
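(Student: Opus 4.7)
The plan is to work pathwise: for a.e.\ $\omega$ I rewrite \eqref{4.4} as a fixed--point equation for the midpoint $v:=u^{n+\frac12}$, solve it by Banach's contraction principle on a ball in $H_0^1(\mathcal{D})$ for small $\tau$, and set $u^{n+1}:=2v-u^n$. Multiplying \eqref{4.4} by $-\mathrm{i}\tau/2$ and introducing $L_\tau:=I-\tfrac{\mathrm{i}\tau}{2}\Delta$ produces the equivalent formulation
\begin{equation*}
v=\mathcal{T}(v):=L_\tau^{-1}\Bigl(u^n+\tfrac{\mathrm{i}\tau}{2}\bigl(\theta|x|^2 v+\lambda|v|^{2\sigma}v\bigr)-\tfrac{\mathrm{i}}{2}\triangle_n W\Bigr).
\end{equation*}
On the Dirichlet eigenbasis of $-\Delta$ the symbol of $L_\tau$ has modulus $\bigl|1+\mathrm{i}\tau k^2\pi^2/2\bigr|\geq 1$, so $L_\tau^{-1}$ is a non-expansion in every $H^s$--norm and $\mathcal{T}$ sends $H_0^1$ into itself. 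The hypothesis $\phi\in\mathcal{L}^4_2$ is more than enough to place $\triangle_n W$ in $H_0^1$ almost surely.

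The contraction estimate rests on the one--dimensionality of $\mathcal{D}$: the embedding $H^1\hookrightarrow L^\infty$ makes $v\mapsto|v|^{2\sigma}v$ locally Lipschitz on $H_0^1$ with modulus polynomial in $\|v\|_{H^1}$, while $v\mapsto\theta|x|^2 v$ is globally Lipschitz because $|x|^2$ is bounded on $(0,1)$. Setting $R:=2(\|u^n\|_{H^1}+\|\triangle_n W\|_{H^1})+1$ and $B_R:=\{v\in H_0^1:\|v\|_{H^1}\leq R\}$, one obtains a constant $C(R)$ such that
\begin{equation*}
\|\mathcal{T}(v)\|_{H^1}\leq \|u^n\|_{H^1}+\tfrac{1}{2}\|\triangle_n W\|_{H^1}+C(R)\tau,\qquad \|\mathcal{T}(v_1)-\mathcal{T}(v_2)\|_{H^1}\leq C(R)\tau\,\|v_1-v_2\|_{H^1}.
\end{equation*}
Picking $\tau$ so small that $C(R)\tau<\min\{1/2,R/4\}$ enforces both $\mathcal{T}(B_R)\subseteq B_R$ and strict contractivity; Banach's theorem then yields a unique $v\in B_R$ with $v=\mathcal{T}(v)$, and $u^{n+1}:=2v-u^n$ is the desired iterate in $H_0^1$.

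Adaptedness propagates by induction on $n$. The Picard iterates $v^{(j+1)}=\mathcal{T}(v^{(j)})$ started from, say, $v^{(0)}=u^n$ are continuous functions of the pair $(u^n,\triangle_n W)$, which is jointly $\mathcal{F}_{t_{n+1}}$--measurable, so their $H^1$--limit $v$ and therefore $u^{n+1}$ are $\mathcal{F}_{t_{n+1}}$--measurable.

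The main obstacle I anticipate is that the threshold for $\tau$ appears pathwise through the random radius $R=R(\omega)$, whereas the statement requests one deterministic smallness condition. To upgrade from pathwise to deterministic I would combine the contraction above with the moment bound \eqref{3.41} and Lemma \ref{2.3}, together with the $H^4$--stability recorded just above the proposition, localising on events $\{\|u^n\|_{H^1}+\|\triangle_n W\|_{H^1}\leq K\}$ of arbitrarily large probability; a single $\tau_0=\tau_0(K)$ then works uniformly on such events, and a standard exhaustion argument with a measurable selection of the fixed--point branch closes the gap.
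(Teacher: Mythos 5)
There is a genuine gap, and it is exactly the one you flag at the end but do not actually repair. Your Banach fixed-point argument requires $C(R)\tau<1/2$ with $R=R(\omega)$ built from $\|u^n(\omega)\|_{H^1}+\|\triangle_n W(\omega)\|_{H^1}$. Since $\triangle_n W$ is a Gaussian increment in $H^1$ (and $u^n$ carries no deterministic a priori bound), $R$ is an unbounded random variable, so for any \emph{fixed} $\tau=T/N$ the contraction fails on a set of positive probability, and on that set your construction produces nothing. The proposed remedy --- localising on $\{\|u^n\|_{H^1}+\|\triangle_n W\|_{H^1}\leq K\}$ and ``exhausting'' --- does not close this: the admissible threshold $\tau_0(K)$ shrinks to $0$ as $K\to\infty$, so for a fixed $\tau$ you can never cover the whole probability space, only events of probability close to (but not equal to) $1$. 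The statement asks for an a.s.\ defined, adapted solution for one deterministic small $\tau$, and a contraction-mapping existence proof cannot deliver that here.

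The paper's proof is structured precisely to avoid this. Existence of $\widetilde{u}^{n+1}$ for \emph{arbitrary} data $(\widetilde{u}^n,\xi^n)\in H_0^1\times H^1$ is obtained by a topological argument (Galerkin approximation plus Brouwer's fixed-point theorem), which needs no smallness of $\tau$ relative to the data; the only smallness of $\tau$ used is a deterministic one, needed to absorb terms like $\tau\,\|\nabla\widetilde{u}^{n+1}\|_{L^2}^2$ in the a priori $L^2$- and energy-estimates that keep the Galerkin approximations bounded in $H_0^1$. The price of giving up the contraction is the loss of uniqueness and of any constructive (Picard-iteration) route to measurability; the paper therefore introduces the set-valued solution map $\Lambda$ and invokes a measurable selection theorem to extract a Borel map $\kappa$ with $u^{n+1}=\kappa(u^n,\triangle_n W)$, which is what yields $\mathcal{F}_{t_n}$-adaptedness. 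If you want to salvage your approach you would have to replace Banach's theorem by a degree-theoretic or Galerkin--Brouwer existence step valid for all $\omega$, and then confront the measurable-selection issue rather than the continuity of Picard iterates.
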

\begin{proof}
Fix a family $\{\xi^n\}_{0\leq n\leq N-1}$ of deterministic functions in $H^1( \mathcal{D})$, we also fix $\widetilde{u}^n \in H_0^1(\mathcal{D}),$ the existence of solution  $\widetilde{u}^{n+1} \in H_0^1(\mathcal{D})$ of 
\begin{align}\label{4.6}
\mathrm{i}\frac{\widetilde{u}^{n+1}-\widetilde{u}^{n}}{\tau}+\Delta \widetilde{u}^{n+\frac12}+\theta|x|^2 \widetilde{u}^{n+\frac12}+\lambda|\widetilde{u}^{n+\frac12}|^{2\sigma}\widetilde{u}^{n+\frac12}=\xi^n
\end{align}
follows from a standard Galerkin method and Brouwer theorem (see 
\cite{De2004A}).
Assuming that $\widetilde{u}^{n+1}\in H_0^1(\mathcal{D})$ is a solution of  \eqref{4.6}, multiplying \eqref{4.6} by the complex conjugate $\overline{\widetilde{u}}^{n+\frac12}$ of $\widetilde{u}^{n+\frac12}$, integrating over $\mathcal{D}$ and taking the imaginary part of the resulting identity, we have
\begin{align*}
\|\widetilde{u}^{n+1}\|_{L^2}^2 &=\|\widetilde{u}^{n}\|_{L^2}^2 +2\tau\Im \int_\mathcal{D } \xi^n {\overline{\widetilde{u}}}^{n+\frac12}dx\\
&\leq \|\widetilde{u}^{n}\|_{L^2}^2 +\tau\bigg(\|\xi^n\|_{L^2}^2+\frac12 \|\widetilde{u}^{n}\|_{L^2}^2 +\frac12 \|\widetilde{u}^{n+1}\|_{L^2}^2 \bigg).
\end{align*}
Therefore,
\begin{align}\label{4.8}
\|\widetilde{u}^{n+1}\|_{L^2}^2 \leq \frac{2+\tau}{2-\tau}\|\widetilde{u}^{n}\|_{L^2}^2 +\frac{2\tau}{2-\tau}\|\xi^n\|_{L^2}^2.
\end{align}
Using the same method,  we multiply \eqref{4.6} by $-\Delta \overline{\widetilde{u}}^{n+\frac12}-\theta|x|^2\overline{ \widetilde{u}}^{n+\frac12}-\lambda|\overline{\widetilde{u}}^{n+\frac12}|^{2\sigma}\overline{\widetilde{u}}^{n+\frac12},$ integrate over $\mathcal{D }$ and take the imaginary part of the resulting identity.  
Therefore, using H\"older inequality and Young's inequality, we obtain
\begin{align*}
&H(\widetilde{u}^{n+1})-H(\widetilde{u}^{n})\\
&=\tau \Im\int_\mathcal{D }\xi^n\bigg(-\Delta \overline{\widetilde{u}}^{n+\frac12}-\theta|x|^2\overline{ \widetilde{u}}^{n+\frac12}-\lambda|\overline{\widetilde{u}}^{n+\frac12}|^{2\sigma}\overline{\widetilde{u}}^{n+\frac12}\bigg)dx\\
&\leq \tau\bigg(\frac12\|\nabla\xi^n\|^2_{L^2}+ \frac14\|\nabla\widetilde u^n\|_{L^2}^2+\frac14\|\nabla\widetilde u^{n+1}\|_{L^2}^2+C(\theta)(\|\xi^n\|^2_{L^2}+ \|\widetilde{u}^{n}\|^2_{L^2}+ \|\widetilde{u}^{n+1}\|^2_{L^2}\big)\\
&~~~+\frac{|\lambda|}{2+2\sigma}\|\xi^n\|^{2+2\sigma}_{L^{2+2\sigma}}+\frac{|\lambda|(2\sigma+1)}{2+2\sigma}\|\overline{\widetilde{u}}^{n+\frac12}\|^{2+2\sigma}_{L^{2+2\sigma}}\bigg)\\
&\leq \tau\bigg(\frac12\|\nabla\xi^n\|^2_{L^2}+ \frac14\|\nabla\widetilde u^n\|_{L^2}^2+\frac14\|\nabla\widetilde u^{n+1}\|_{L^2}^2+C(\theta)(\|\xi^n\|^2_{L^2}+ \|\widetilde{u}^{n}\|^2_{L^2}+ \|\widetilde{u}^{n+1}\|^2_{L^2}\big)\\
&~~~+\frac{|\lambda|}{2+2\sigma}\|\xi^n\|^{2+2\sigma}_{L^{2+2\sigma}}+
\frac{\lambda|(2\sigma+1)}{4}\|\nabla\widetilde u^{n+\frac12}\|_{L^2}^2+C(\sigma, \lambda)\|\widetilde{u}^{n+\frac12}\|^{2+{\frac{4\sigma}{2-\sigma}}}_{L^2}\bigg)\\
&\leq \tau\bigg(\frac12\|\nabla\xi^n\|^2_{L^2}+ \frac14\|\nabla\widetilde u^n\|_{L^2}^2+\frac14\|\nabla\widetilde u^{n+1}\|_{L^2}^2+C(\theta)(\|\xi^n\|^2_{L^2}+ \|\widetilde{u}^{n}\|^2_{L^2}+ \|\widetilde{u}^{n+1}\|^2_{L^2})\\
&~~~+\frac{1}{2+2\sigma}\|\xi^n\|^{2+2\sigma}_{L^{2+2\sigma}}+
\frac{\lambda|(2\sigma+1)}{4}\|\nabla\widetilde u^{n}\|_{L^2}^2+\frac{\lambda|(2\sigma+1)}{4}\|\nabla\widetilde u^{n+1}\|_{L^2}^2\\
&~~~+C(\sigma, \lambda)\|\widetilde{u}^{n}\|^{2+{\frac{4\sigma}{2-\sigma}}}_{L^2}+C(\sigma, \lambda)\|\widetilde{u}^{n+1}\|^{2+{\frac{4\sigma}{2-\sigma}}}_{L^2}\bigg)\\
&\leq \frac{1}{4}\|\nabla\widetilde{u}^{n+1}\|_{L^2}^2+C\big(\tau, \sigma, |\lambda|, |\theta|, \|\xi^n\|_{H^1}, \|\widetilde{u}^{n}\|_{H^1},\|\widetilde{u}^{n+1}\|_{L^2}\big),
\end{align*}
where the second inequality follows from Gagliardo--Nirenberg's inequality and the last inequality follows from the fact that $\tau$ is sufficiently small. From \eqref{4.8} and Lemma \ref{4.7}, we have
\begin{align*}
\|\widetilde{u}^{n+1}\|^2_{H^1}\leq C\big(\tau, \sigma, |\lambda|, |\theta|,  \|\xi^n\|_{H^1}, \|\widetilde{u}^{n}\|_{H^1}\big).
\end{align*}

Define a map 
$$\Lambda: H_0^1\times H^1 \ni (\widetilde{u}^{n}, \xi^n)\to \Lambda(\widetilde{u}^{n}, \xi^n)\in \mathcal{P}(H_0^1),$$
where $\mathcal{P}(H_0^1)$ is the set of subsets of $H_0^1(\mathcal{D})$, $\Lambda(\widetilde{u}^{n}, \xi^n)$ is the set of solutions $\widetilde{u}^{n+1}$ of  \eqref{4.6}. From the closedness of the graph of $\Lambda$ and a selector theorem, there exists a universal and Borel measurable map $\kappa:H_0^1\times H^1 \to H_0^1$ such that $\kappa(u, \xi)\in \Lambda(u,\xi)$ for $(u,\xi)\in H_0^1\times H^1.$ Assume that $u^n \in H_0^1 $ is 
$\mathcal{F}_{t_n}$-measurable random variable, then $u^{n+1}=\kappa(u^n, \triangle_n W)$ is $H_0^1$-valued solution of \eqref{4.4}.
\end{proof}

Now we investigate the convergence  rate of the scheme \eqref{4.4} under $\sigma=1$. To
deal with the power law of the nonlinear term, we introduce a cut-off function
$\mu \in C^{\infty} (\mathcal{D})$ such that $\rm{supp} \mu \in [0,2]$ and $\mu=1$ on
 $[0,1]$ (see \cite{BDA06}).
  Write $\mu_{R}(u)=\mu\big(\frac{\|u\|_{H^1}}{R}\big)$, then we consider the truncated equation
 \begin{align}\label{4.9}
\mathrm{i}d u_R+\big(\Delta u_R+\theta|x|^2 u_R+\lambda\mu_{R}(u_R)|u_R|^{2} u_R\big)dt=d W.
\end{align}
Denote $f(u_R)=\mu_{R}(u_R)\big(|u_R|^{2} u_R\big)$,
the mild form of the corresponding mid-point  scheme of \eqref{4.9} is 
\begin{align}\label{4.10}
u_R^{n+1}=S_{\tau}u_R^{n}+\mathrm{i}\tau\theta |x|^2 T_\tau u_R^{n+\frac{1}{2}}+\mathrm{i}\tau\lambda T_\tau f(u_R^{n+\frac{1}{2}})-\mathrm{i}\sqrt{\tau}T_\tau \chi_{n+1},
   \end{align}
where
\begin{align*}
S_{\tau}=\bigg(1-\frac{ \mathrm{i}\tau}{2} \Delta\bigg)^{-1} \bigg(1+\frac{ \mathrm{i}\tau}{2} \Delta\bigg),~~T_{\tau}=\big(1-\frac{ \mathrm{i}\tau}{2} \Delta\big)^{-1}, ~~\chi_{n+1}=\frac{W(t_{n+1})-W(t_{n})}{\sqrt{\tau}}.
  \end{align*}
We have the following estimates to operators $S_{\tau}$ and $T_{\tau}$ ( \cite{BDA06}) with $\alpha\in[0,1]$, which are are useful in  the convergence analysis below :

$$\|S_{\tau}\|_{\mathcal{L}(L^2)}\leq1,~~~\|T_{\tau}\|_{\mathcal{L}(L^2)}\leq1,~~~,\|S_{\tau}-I\|_{\mathcal{L}(H^{2+\alpha}, H^\alpha)}\leq K\tau, $$
\begin{align}\label{4.999}
\|S(t_n)-S^{n}_{\tau}\|_{\mathcal{L}(H^{3+\alpha}, H^\alpha)}\leq K\tau~~~\|S(-s)-S^{-n}_{\tau}T_{\tau}\|_{\mathcal{L}(H^{3\alpha}, L^2)}\leq K\tau^\alpha.
  \end{align}

The scheme \eqref{4.10} is well-defined and $\{u_R^n\}_{0\leq n\leq N}$ 
are uniformly bounded provided
that the nonlinear term   $f(\cdot)$ is global Lipschitz. This Lipschitz continuity
can be guaranteed by 
Proposition 2.2 in \cite{BBD15} because $H^1$ is an algebra.

Assume that $u_0 \in  \dot{H}^4$ ,  $\phi\in\mathcal{ L}^4 _2$. From the global Lipschitz continuity of  the nonlinear term, we obtain that 
 \begin{align}\label{5.00}
\{f(u_R(t))\}_{t\in[0,T]}\in L^{2p}\big(\Omega;  L^{\infty}(0,T;\dot{H}^4)\big)
\end{align}
 and
$$\mathbf{E}\sup_{0\leq t\leq T}\|u_R(t)\|^{2p}_{\dot{H}^4}\leq C\big(p,T,R,\|u_0\|_{\dot{H}^4},\|\phi\|_{\mathcal{ L}^4 _2}\big)$$
using Sobolev embedding and Gronwall's inequality.
Now, we prove the following error estimate, the key of its proof lies in the mild solution and the unitarity of the both $S(t,r)$ and $S_{\tau}$.
\begin{proposition}\label{5.2}
Let $\sigma=1$, $u_0 \in  \dot{H}^4$ and $\phi\in \mathcal{L}^4 _2$, then for any $T\geq 0$, there exists a constant $C$ such that
 \begin{align*}
\mathbf{E}\max_{n=0,\cdots, N} \|u_{R}(t_n)-u_{R}^n\|^{2p}_{H^1}\leq C\tau^{2p}.
   \end{align*}
\end{proposition}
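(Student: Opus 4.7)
I would iterate the single-step identity~\eqref{4.10} to obtain the discrete mild form
\begin{align*}
u_R^n &= S_\tau^n u_0 + \mathrm{i}\tau\theta\sum_{k=0}^{n-1} S_\tau^{n-1-k}\bigl(|x|^2 T_\tau u_R^{k+\frac{1}{2}}\bigr)\\
&\quad + \mathrm{i}\tau\lambda\sum_{k=0}^{n-1} S_\tau^{n-1-k}T_\tau f\bigl(u_R^{k+\frac{1}{2}}\bigr) - \mathrm{i}\sum_{k=0}^{n-1} S_\tau^{n-1-k}T_\tau \triangle_k W,
\end{align*}
and subtract it termwise from the mild form~\eqref{1.2} of $u_R(t_n)$ (with $f$ in place of the cubic nonlinearity). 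Setting $e^n:=u_R(t_n)-u_R^n$, this writes $e^n$ as the sum of four contributions $e^n_{\mathrm{ini}}+e^n_{\mathrm{pot}}+e^n_{\mathrm{non}}+e^n_{\mathrm{sto}}$ coming respectively from the initial propagation, the potential integral, the nonlinear integral and the stochastic integral.

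For the initial piece $e^n_{\mathrm{ini}}=(S(t_n)-S_\tau^n)u_0$, the third inequality in~\eqref{4.999} with $\alpha=1$ together with $u_0\in\dot{H}^4$ yields $\|e^n_{\mathrm{ini}}\|_{H^1}\le K\tau\|u_0\|_{\dot{H}^4}$. For $e^n_{\mathrm{pot}}$ and $e^n_{\mathrm{non}}$, a generic summand takes the form
\begin{align*}
\int_{t_k}^{t_{k+1}}\!\bigl(S(t_n-r)-S_\tau^{n-1-k}T_\tau\bigr)g(u_R(r))\,dr + S_\tau^{n-1-k}T_\tau\!\int_{t_k}^{t_{k+1}}\!\bigl(g(u_R(r))-g(u_R^{k+\frac{1}{2}})\bigr)dr,
\end{align*}
with $g(u)=|x|^2 u$ or $g(u)=f(u)$. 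The first subterm is $O(\tau^2)$ per step by~\eqref{4.999} combined with the $\dot{H}^4$ regularity of $u_R$ and of $f(u_R)$ recorded in~\eqref{5.00}. In the second subterm I would insert $\pm g(u_R(t_{k+\frac{1}{2}}))$: the piece $g(u_R(r))-g(u_R(t_{k+\frac{1}{2}}))$ is controlled by a Taylor expansion of $r\mapsto u_R(r)$ via the mild equation and the Lipschitz property of $f$ on $H^1$, while $g(u_R(t_{k+\frac{1}{2}}))-g(u_R^{k+\frac{1}{2}})$ is bounded by $\frac{1}{2}(\|e^k\|_{H^1}+\|e^{k+1}\|_{H^1})$ up to a constant depending on $R$ (using that the cut-off renders $f$ globally Lipschitz on $H^1$ and that $H^1$ is a Banach algebra).

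The stochastic contribution
\begin{align*}
e^n_{\mathrm{sto}}=-\mathrm{i}\sum_{k=0}^{n-1}\int_{t_k}^{t_{k+1}}\bigl(S(t_n-r)-S_\tau^{n-1-k}T_\tau\bigr)dW(r)
\end{align*}
is a sum of martingale increments, so the Burkholder--Davis--Gundy inequality and the It\^o isometry reduce the $H^1$ $2p$-th moment estimate to bounding $\sum_k\int_{t_k}^{t_{k+1}}\|(S(t_n-r)-S_\tau^{n-1-k}T_\tau)\phi\|^2_{\mathcal{L}_2(U,H^1)}dr$; by~\eqref{4.999} with $\alpha=1$ and $\phi\in\mathcal{L}^4_2$ each Hilbert--Schmidt norm is $O(\tau)$, giving a total $O(\tau^2)$, and hence $\mathbf{E}\|e^n_{\mathrm{sto}}\|_{H^1}^{2p}=O(\tau^{2p})$. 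Collecting the four pieces produces
\begin{align*}
\mathbf{E}\max_{\ell\le n}\|e^\ell\|_{H^1}^{2p}\le C\tau^{2p}+C\tau\sum_{k=0}^{n-1}\mathbf{E}\max_{\ell\le k}\|e^\ell\|_{H^1}^{2p},
\end{align*}
and the discrete Gronwall inequality closes the argument.

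The main obstacle I anticipate is the midpoint coupling $u_R^{k+\frac{1}{2}}=\frac{1}{2}(u_R^k+u_R^{k+1})$: it introduces an implicit dependence on $e^{k+1}$ in the error identity, which must be absorbed into the left-hand side for $\tau$ sufficiently small, and it forces the local consistency analysis to be performed carefully so as to recover the full order one in $H^1$ rather than only $\tau^{1/2}$. A secondary point is verifying that $\|S_\tau\|_{\mathcal{L}(H^1)}$ and $\|T_\tau\|_{\mathcal{L}(H^1)}$ are $\le 1$, which holds because both operators are functions of $\Delta$ and therefore commute with $\nabla$ on $\dot{H}^2$.
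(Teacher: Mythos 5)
Your plan follows essentially the same route as the paper: iterate the scheme to its discrete mild form, subtract it termwise from the continuous mild form to split the error into initial, potential, nonlinear and stochastic contributions, estimate the semigroup discrepancies via \eqref{4.999}, absorb the implicit midpoint dependence on $e^{k+1}$ for $\tau$ small (this is exactly the paper's stability bound \eqref{4.11} for the map $g_n$, with the factor $2/(2-\tau(C(\theta)+C(L_f,|\lambda|)))$), and close with discrete Gronwall. The decomposition of each summand into a consistency part and a Lipschitz-stability part via inserting $\pm g(u_R(t_{k+\frac12}))$ is the same device the paper uses when it compares $u(t_n)$ with $g_n(u^0,\dots,u(t_{n-1}))$.

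There is, however, one point where the plan as written would not deliver order one, and it is the key technical idea of the paper's proof. When you control $g(u_R(r))-g(u_R(t_{k+\frac12}))$ by expanding $r\mapsto u_R(r)$ through the mild equation, the increment contains the stochastic term $\int_{t_{l-1}}^{r}S(r-s)\,dW(s)$, whose $L^{2p}(\Omega;H^1)$ norm is only $O(\tau^{1/2})$. Bounding each of the $N\sim\tau^{-1}$ summands by $\tau\cdot\tau^{1/2}$ and summing gives $O(\tau^{1/2})$ overall, i.e.\ exactly the half-order loss you flag but do not resolve; Taylor expansion plus the Lipschitz property of $f$ cannot repair this. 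The paper's resolution is a stochastic Fubini interchange: in the terms $B^{24}_n$ and the stochastic part of $C^2_n$, the double integral $\sum_l\int_{t_{l-1}}^{t_l}(\cdots)\int_{t_{l-1}}^{r}S(r-s)\,dW(s)\,dr$ is rewritten as $\sum_l\int_{t_{l-1}}^{t_l}(\cdots)\bigl(\int_{s}^{t_l}S(r-s)\,dr\bigr)dW(s)$, whose integrand is deterministic and of size $O(\tau)$; the whole expression is then a martingale to which the Burkholder--Davis--Gundy inequality applies, and the quadratic variation sums to $O(\tau^{2})$, recovering the full rate. You correctly apply this martingale machinery to the global stochastic term $e^n_{\mathrm{sto}}$, but the same interchange is indispensable inside the drift consistency terms, and without it the claimed bound $C\tau^{2p}$ does not follow from the steps you describe.
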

\begin{proof} For simplicity, we omit the dependence $R$ of $u^n$ and $u$.  
Assume that $\tau<2/(C({\theta})+C(L_f, |\lambda|))$, here $L_f$ denotes the Lipschitz constant of $f$. Clearly,
 \begin{align*}
 u^n=&S^n_{\tau}u_0+\mathrm{i}\tau\theta\sum_{l=1}^{n} S^{n-l}_{\tau}T_{\tau} |x|^2 u^{l-\frac{1}{2}}+\mathrm{i}\tau\lambda \sum_{l=1}^{n} S^{n-l}_{\tau}T_{\tau} f(u^{l-\frac{1}{2}})-\mathrm{i}\sqrt{\tau} \sum_{l=1}^{n} S^{n-l}_{\tau}T_\tau \chi_l.
 \end{align*}
Define the mapping $g_n$ which maps $u^0,\cdots, u^{n-1}$ to $u^n$. For any sequences $\{u^n\}_{0\leq n\leq N}$  and $\{v^n\}_{0\leq n\leq N}$  with $u_0=v_0,$ we have
 \begin{align*}
&\|g_n(u^0,\cdots, u^{n-1})-g_n(v^0,\cdots, v^{n-1})\|_{H^1}\\
&\leq\tau\big\|\theta\sum_{l=1}^{n} S^{n-l}_{\tau}T_{\tau} |x|^2 (u^{l-\frac{1}{2}}- v^{l-\frac{1}{2}})\big\|_{H^1}+\tau\big\|\lambda \sum_{l=1}^{n} S^{n-l}_{\tau}T_{\tau} \big(f(u^{l-\frac{1}{2}})- f(v^{l-\frac{1}{2}})\big)\big\|_{H^1}\\
&\leq\tau C({\theta})\sum_{l=1}^{n-1}\|u^l-v^l\|_{H^1}+\frac{\tau C({\theta})}{2}\|g_n(u^0,\cdots, u^{n-1})-g_n(v^0,\cdots, v^{n-1})\|_{H^1}\\
&~~~+\tau C(L_f, |\lambda|)\sum_{l=1}^{n-1}\|u^l-v^l\|_{H^1}+\frac{\tau C(L_f, |\lambda|)}{2}\|g_n(u^0,\cdots, u^{n-1})-g_n(v^0,\cdots, v^{n-1})\|_{H^1}.
 \end{align*}
This fact provides that
 \begin{align}\label{4.11}
 \|g_n(u^0,\cdots, u^{n-1})-g_n(v^0,\cdots, v^{n-1})\|_{H^1}\leq  \frac{2\tau (C({\theta})+|C(L_f, |\lambda|))}{2-\tau (C({\theta})+C(L_f, |\lambda|))}\sum_{l=1}^{n-1}\|u^l-v^l\|_{H^1}.
  \end{align}
   We know that
 \begin{align*}
 u(t_n)=&S(t_n)u_0+\mathrm{i}\theta\int_0^{t_n} S(t_n-r) |x|^2 u(r)dr\\
&+\mathrm{i}\lambda\int_0^{t_n} S(t_n-r) f(u(r))dr-\mathrm{i}\int_0^{t_n} S(t_n-r)dW(r)
 \end{align*}
and deduce
\begin{align*}
& u(t_n)- g_n(u^0,\cdots, u(t_{n-1}))=\big(S(t_n)-S^n_{\tau}\big)u_0\\
& +\mathrm{i}\theta\sum_{l=1}^{n} \int_{t_{l-1}}^{t_l}\big(S(t_n-r) |x|^2 u(r)-S^{n-l}_{\tau}T_{\tau}  |x|^2 u(t_{l-\frac{1}{2}})\big)dr\\
&+ \mathrm{i}\lambda\sum_{l=1}^{n} \int_{t_{l-1}}^{t_l}\big(S(t_n-r) f(u(r))-S^{n-l}_{\tau}T_{\tau} f(u(t_{l-\frac{1}{2}}))\big) dr\\
 &-\mathrm{i} \sum_{l=1}^{n}\int_{t_{l-1}}^{t_l}\big( S(t_n-r)-S^{n-l}_{\tau}T_{\tau}\big)dW(r)\\
& =:A_n+B_n+C_n+D_n.
 \end{align*}
 From \eqref{4.999}, 
 the terms $A_n$ and $D_n$ are estimated as following,
 \begin{align}\label{4.111}
 \mathbf{E}\max_{n=0,\cdots,N} \|A_n+D_n\|^{2p}_{H^1}\leq C\tau^{2p}\big(\mathbf{E}(\|u_0\|^{2p}_{H^4})+\|\phi\|^{2p}_{\mathcal{L}_2^4}\big).
 \end{align}
 For the term $B_n$, we divide it into the following parts
 \begin{align*}
&\mathrm{i}\theta\sum_{l=1}^{n} \int_{t_{l-1}}^{t_l}\big(S(t_n-r) |x|^2 u(r)-S^{n-l}_{\tau}T_{\tau}  |x|^2u(t_{l-\frac{1}{2}})\big)dr\\
&=\mathrm{i}\theta\sum_{l=1}^{n} \int_{t_{l-1}}^{t_l}\big(S(t_n-r)  -S^{n-l}_{\tau}T_{\tau}\big)|x|^2u(r)dr+\mathrm{i}\theta\sum_{l=1}^{n} \int_{t_{l-1}}^{t_l}S^{n-l}_{\tau}T_{\tau}|x|^2 \big(u(r)-u(t_{l-1})\big)dr\\
&~~~+\mathrm{i}\theta\sum_{l=1}^{n} \int_{t_{l-1}}^{t_l}S^{n-l}_{\tau}T_{\tau}|x|^2 \big(u(t_{l-1})-u(t_{l-\frac{1}{2}})\big)dr\\
&=:B^1_n+B^2_n+B^3_n.
 \end{align*}
 Concerning the first term $B^1_n$, we have
  \begin{align*}
& \bigg\|\mathrm{i}\theta\sum_{l=1}^{n} \int_{t_{l-1}}^{t_l}\big(S(t_n-r)  -S^{n-l}_{\tau}T_{\tau}\big)|x|^2u(r)dr\bigg\|_{H_1}\\
& \leq\bigg\|\mathrm{i}\theta\sum_{l=1}^{n} \int_{t_{l-1}}^{t_l}\big(S(t_n-r)-S(t_n)S^{-l}_{\tau}T_{\tau}\big)|x|^2u(r)dr\bigg\|_{H_1}\\
&+\bigg\|\mathrm{i}\theta\sum_{l=1}^{n} \int_{t_{l-1}}^{t_l}\big(S(t_n)-S^{n}_{\tau}\big)S^{-l}_{\tau}T_{\tau}|x|^2u(r)dr\bigg\|_{H_1}\\
&\leq\bigg\|\mathrm{i}\theta\sum_{l=1}^{n} \int_{t_{l-1}}^{t_l}\big(S(-r)-S^{-l}_{\tau}T_{\tau}\big)|x|^2u(r)dr\bigg\|_{H_1}\\
&+\|S(t_n)-S^{n}_{\tau}\|_{\mathcal{L}(H^3, L^2)}\bigg\|\mathrm{i}\theta\sum_{l=1}^{n} \int_{t_{l-1}}^{t_l}S^{-l}_{\tau}T_{\tau}|x|^2 u(r)dr\bigg\|_{H_4}\\
&\leq CT \tau \sup_{t\in [0,T]}\|u(t)\|_{H^4},
   \end{align*}
 so that
   \begin{align*}
 \mathbf{E}\max_{n=0,\cdots,N}\|B^1_n\|^{2p}_{H^1} \leq C \tau^{2p} .
  \end{align*}
 In the context of mild solution of \eqref{4.9}, we have
  \begin{align*}
 u(r)=&S(r-t_{l-1})u(t_{l-1})+\mathrm{i}\theta\int_{t_{l-1}}^r S(r-s) |x|^2 u(s)ds\\
&+\mathrm{i}\lambda\int_{t_{l-1}}^r S(r-s)f(u(s))ds-\mathrm{i}\int_{t_{l-1}}^r S(r-s)dW(s).
  \end{align*}
Therefore,
  \begin{align*}
B^2_n
=&\mathrm{i}\theta\sum_{l=1}^{n} \int_{t_{l-1}}^{t_l} S^{n-l}_{\tau}T_{\tau}|x|^2\big(S(r-t_{l-1})-Id\big)u(t_{l-1})dr\\
&+\mathrm{i}\theta\sum_{l=1}^{n} \int_{t_{l-1}}^{t_l}S^{n-l}_{\tau}T_{\tau}|x|^2 \bigg(i\theta\int_{t_{l-1}}^r S(r-s) |x|^2 u(s)ds\bigg)dr\\
&+\mathrm{i}\theta\sum_{l=1}^{n} \int_{t_{l-1}}^{t_l}S^{n-l}_{\tau}T_{\tau}|x|^2 \bigg(i\lambda\int_{t_{l-1}}^r S(r-s)f(u(s))ds\bigg)dr\\
&-\mathrm{i}\theta\sum_{l=1}^{n} \int_{t_{l-1}}^{t_l} S^{n-l}_{\tau}T_{\tau}|x|^2\bigg(i\int_{t_{l-1}}^r S(r-s)dW(s)\bigg)dr\\
=:&B^{21}_n+B^{22}_n+B^{23}_n+B^{24}_n.
  \end{align*}
  For $\rho\in \mathbf{R},$ $S(\rho)$  is an isomery and
    \begin{align*}
 \|S(\rho)-Id\|_{\mathcal{L}(H^{4},H^2)} \leq C\rho.
   \end{align*}
  Thus,
   \begin{align*}
\|B^{21}_n\|_{H^1}\leq CT \tau\sup_{t\in[0,T]}\|u(t)\|_{H^4},
   \end{align*}
which leads to
    \begin{align*}
  \mathbf{E}\max_{n=0,\cdots, N}\|B^{21}_n\|^{2p}_{H^1}\leq C \tau^{2p}.
   \end{align*}
Because $\|B^{22}_n\|_{H^1}\leq CT\tau\sup\limits_{t\in[0,T]}\|u(t)\|_{H^4} $,  we have
  \begin{align*}
  \mathbf{E}\max_{n=0,\cdots, N}\|B^{22}_n\|^{2p}_{H^1}\leq C \tau^{2p}.
   \end{align*}
   Due to $\|B^{23}_n\|_{H^1}\leq CT\tau\sup\limits_{t\in[0,T]}\|f(u(t))\|_{H^4} $, we get
  \begin{align*}
 \mathbf{E}\max_{n=0,\cdots, N}\|B^{23}_n\|^{2p}_{H^1}\leq C \tau^{2p}.
   \end{align*}
Using Fubini's theorem and martingale inequality, we have
 \begin{align*}
 &\mathbf{E}\max_{n=0,\cdots, N}\|B^{24}_n\|^{2p}_{H^1}\\
&= \mathbf{E}\max_{n=0,\cdots, N}\bigg\|-\mathrm{i}\theta\sum_{l=1}^{n} \int_{t_{l-1}}^{t_l} S^{n-l}_{\tau}T_{\tau}|x|^2\bigg(i\int^{t_{l}}_s S(r-s)dr\bigg)dW(s) \bigg\|^{2p}_{H^1} \\
& \leq C_p \mathbf{E}\bigg(\sum_{l=1}^{N} \int_{t_{l-1}}^{t_l}\sum_{k\in\mathbf{N}}\bigg\|\theta S^{n-l}_{\tau}T_{\tau}|x|^2\bigg(\int^{t_{l}}_s S(r-s)\phi e_kdr\bigg)\bigg \|^2_{\mathcal{L}_2^1}ds\bigg)^p
 \leq C \tau^{2p}.
   \end{align*}  
  
For $B^{3}_n$,  
     \begin{align*}
  \|B^{3}_n\|_{H^1}=&\bigg\|\mathrm{i}\theta\sum_{l=1}^{n} \int_{t_{l-1}}^{t_l}S^{n-l}_{\tau}T_{\tau} \big(|x|^2u(t_{l-1})-|x|^2u(t_{l-\frac{1}{2}})\big)dr \bigg\|_{H^1}  \\
\leq&CT\big\|u(t_{l-1})-u(t_{l-\frac{1}{2}})\big\|_{H^1}.  
  \end{align*} 
  Using the definition of $u(t)$, we can estimate  $B^{3}_n$ similarly.

It remains to estimate the term $C_n.$  It can be decomposed into a sum
  \begin{align*}
  &\mathrm{i}\lambda\sum_{l=1}^{n} \int_{t_{l-1}}^{t_l}\big(S(t_n-r) f(u(r))-S^{n-l}_{\tau}T_{\tau} f(u(t_{l-\frac{1}{2}}))\big)dr \\
 & = \mathrm{i}\lambda\sum_{l=1}^{n} \int_{t_{l-1}}^{t_l}\big(S(t_n-r)-S^{n-l}_{\tau}T_{\tau}\big) f(u(r))dr\\
&~~~+ \mathrm{i}\lambda\sum_{l=1}^{n} \int_{t_{l-1}}^{t_l}S^{n-l}_{\tau}T_{\tau} \big(f(u(r))-f(u(t_{l-1}))\big)dr \\
&+\mathrm{i}\lambda\sum_{l=1}^{n} \int_{t_{l-1}}^{t_l}S^{n-l}_{\tau}T_{\tau} \big(f(u(t_{l-1}))-f(u(t_{l-\frac{1}{2}}))\big)dr\\
 & =: C^1_n+C^2_n+C^3_n.
  \end{align*}
For the term $C^1_n$, we have
  \begin{align*}
& \bigg\|\mathrm{i}\lambda\sum_{l=1}^{n} \int_{t_{l-1}}^{t_l}\big(S(t_n-r)-S^{n-l}_{\tau}T_{\tau}\big) f\big(u(r)\big)dr\bigg\|_{H_1}\\
& \leq\bigg\|\mathrm{i}\lambda\sum_{l=1}^{n} \int_{t_{l-1}}^{t_l}\big(S(t_n-r)-S(t_n)S^{-l}_{\tau}T_{\tau}\big)f\big(u(r)\big)dr\bigg\|_{H_1}\\
&+\bigg\|\mathrm{i}\lambda\sum_{l=1}^{n} \int_{t_{l-1}}^{t_l}\big(S(t_n)-S^{n}_{\tau}\big)S^{-l}_{\tau}T_{\tau}f\big(u(r)\big)dr\bigg\|_{H_1}\\
&\leq\bigg\|\mathrm{i}\lambda\sum_{l=1}^{n} \int_{t_{l-1}}^{t_l}\big(S(-r)-S^{-l}_{\tau}T_{\tau}\big)f\big(u(r)\big)dr\bigg\|_{H_1}\\
&+\|S(t_n)-S^{n}_{\tau}\|_{\mathcal{L}(H^3, L^2)}\bigg\|\mathrm{i}\lambda\sum_{l=1}^{n} \int_{t_{l-1}}^{t_l}S^{-l}_{\tau}T_{\tau} f\big(u(r)\big)dr\bigg\|_{H_4}.
   \end{align*}
Similar to the estimates of $B^1_n$, we have
    \begin{align*}
 \|C^1_n\|_{H^1}\leq C T\tau\sup_{t\in[0,T]}\|f(u(t))\|_{H^4}.
   \end{align*}
   Therefore, 
   \begin{align*}
 \mathbf{E}\max_{n=0,\cdots,N}\|C^1_n\|^{2p}_{H^1} \leq C \tau^{2p} .
  \end{align*}
 Using the definition of the mild solution and Taylor formula,
  \begin{align*}
 C^2_n=& \mathrm{i}\lambda\sum_{l=1}^{n} \int_{t_{l-1}}^{t_l}S^{n-l}_{\tau}T_{\tau} f^{\prime}(u(t_{l-1}))\bigg((S(r-t_{l-1})-Id)u(t_{l-1})\\
 &+\mathrm{i}\theta\int_{t_{l-1}}^r S(r-s) |x|^2 u(s)ds+i\lambda\int_{t_{l-1}}^r S(r-s)f(u(s))ds\bigg)dr\\
 & -\sum_{l=1}^{n} \int_{t_{l-1}}^{t_l}S^{n-l}_{\tau}T_{\tau} f^{\prime}(u(t_{l-1}))
 \left( \int_{t_{l-1}}^r S(r-s)dW(s)\right)dr\\
 &+\frac{\mathrm{i}}{2}\sum_{l=1}^{n} \int_{t_{l-1}}^{t_l}S^{n-l}_{\tau}T_{\tau} \int_0^1 f^{{\prime}{\prime}}(\rho u(r)+(1-\rho)u(t_{l-1}))\\
 &\cdot( u(r)-u(t_{l-1}), u(r)-u(t_{l-1}))d\rho dr.
 \end{align*}
 Combined with the estimate of $B_n$ and \eqref{5.00}, 
   \begin{align*}
   \mathbf{E}\max_{n=0,\cdots, N}\|C^2_n\|^{2p}_{H^1} \leq C \tau^{2p} .
   \end{align*}
 For the term $C^3_n$, the estimate of $B_n^3$ and Lipschitz continuity of $f$ yield that
    \begin{align*}
\mathbf{E} \max_{n=0,\cdots, N}\|C^3_{n}\|^{2p}_{H^1}&\leq  C\tau^{2p}.
 \end{align*}
Summarize the above estimates, we obtain
  \begin{align*}
\mathbf{E} \max_{n=0,\cdots, N}\|u(t_{n} )-g_{n}(u^0,\cdots u(t_{n-1} ))\|^{2p}_{H^1}\leq C\tau^{2p}.
 \end{align*}
Then, by \eqref{4.11} and Minkowski inequality, we have
\begin{align*}
&\big( \mathbf{E} \max_{\tilde{n}=0,\cdots, n}\| u^{\tilde{n}}-u(t_{\tilde{n}})\|^{2p}_{H^1}\big)^{1/2p}\\
&\leq C\frac{2(C({\theta})+C(|\lambda|, L_f))}{2-\tau (C({\theta})+C(|\lambda|, L_f))}\tau\sum_{l=1}^{n-1}\bigg(\mathbf{E}\big( \max_{\tilde{l}=0,\cdots, l}\| u^{\tilde{l}}-u(t_{\tilde{l}})\|^{2p}\big)\bigg)^{1/2p}+C\tau.
 \end{align*}
 The result follows from the discrete Gronwall's inequality.
\end{proof}

\begin{theorem}
 Let $\sigma=1$, $u_0 \in  \dot{H}^4$ and $\phi\in \mathcal{L}^4 _2$, then for any $0\leq n\leq N$, 
  $$\lim_{C\to\infty}\mathbf{ P}\big(\max_{n=0,\cdots, N}\|u(t_n)-u^n\|_{H^1}\geq C\tau\big)=0.$$
\end{theorem}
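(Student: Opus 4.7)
The plan is to upgrade the strong moment bound of Proposition \ref{5.2} (for the truncated problem) into the stated convergence in probability via a standard localization/Chebyshev argument.

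First, for each $R>0$, I introduce the good event
$$\Omega_R := \Big\{\sup_{0\le t\le T}\|u(t)\|_{H^1}\le R\Big\}\cap\Big\{\max_{0\le n\le N}\|u_R^n\|_{H^1}\le R\Big\}.$$
On $\Omega_R$ the cut-off $\mu_R$ equals $1$ at every point where it is evaluated, so pathwise uniqueness of the mild solution of \eqref{1.1} yields $u\equiv u_R$ on $[0,T]$, and an induction on $n$ (using the convexity bound $\|u_R^{n+\frac12}\|_{H^1}\le R$) gives $u^n=u_R^n$ for $0\le n\le N$. Writing
$$\mathbf{P}\Big(\max_n\|u(t_n)-u^n\|_{H^1}\ge C\tau\Big)\le\mathbf{P}(\Omega_R^c)+\mathbf{P}\Big(\max_n\|u_R(t_n)-u_R^n\|_{H^1}\ge C\tau\Big),$$
I bound the second summand by $C(R)/C^{2p}$ via Chebyshev's inequality and Proposition \ref{5.2}, which vanishes as $C\to\infty$ with $R$ fixed.

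Next, I control $\mathbf{P}(\Omega_R^c)$ by splitting further. The continuous piece $\mathbf{P}(\sup_t\|u(t)\|_{H^1}>R)$ tends to $0$ as $R\to\infty$ by Lemma \ref{2.3} and Chebyshev. For the discrete piece I exploit the inclusion
$$\{\max_n\|u_R^n\|_{H^1}>R\}\subseteq\{\sup_t\|u(t)\|_{H^1}>R/2\}\cup\{\max_n\|u_R(t_n)-u_R^n\|_{H^1}>R/2\},$$
valid because $u=u_R$ on $\{\sup_t\|u\|_{H^1}\le R/2\}$. The first set is handled as above; the second by Chebyshev and Proposition \ref{5.2}, giving a bound of the form $C(R)\tau^{2p}/R^{2p}$, small for fixed $R$ once $\tau$ is small.

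Putting everything together: given $\epsilon>0$, I first fix $R$ large enough so that $\mathbf{P}(\Omega_R^c)<\epsilon/2$ (uniformly for $\tau$ in the relevant regime), then choose $C$ so large that $C(R)/C^{2p}<\epsilon/2$. The main obstacle is the dependence of the constant in Proposition \ref{5.2} on the truncation radius $R$, so one must carefully coordinate the limits---choosing $R$ before $C$, and using that the $\tau$-dependence in the residual term enters only through the factor $\tau^{2p}$---so that the overall bound is uniform in $\tau$ and degenerates solely through $C$.
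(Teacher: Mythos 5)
Your proposal is correct and follows essentially the same route as the paper: localize via the truncation radius $R$ (the paper phrases this with a stopping time $\iota_R$ rather than your good event $\Omega_R$, but the decomposition into $\mathbf{P}(\sup\|u\|_{H^1}>R)+\mathbf{P}(\max_n\|u^n\|_{H^1}>R)+$ a Chebyshev term controlled by Proposition \ref{5.2} is identical), then send $R\to\infty$ before $C\to\infty$. Your explicit inclusion argument for the discrete piece $\{\max_n\|u_R^n\|_{H^1}>R\}$ is a useful elaboration of what the paper dismisses as ``the uniform boundedness of $u^n$,'' but it is a refinement of the same proof, not a different one.
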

\begin{proof}
Define a stopping time 
   \begin{align*}
   \iota_R=\inf_{n=0,\cdots, N} {\{t_n:\|u_R^{n-1}\|_{H^1}\geq R ~~or~~ \|u_R^{n}\|_{H^1}\geq R \}},
      \end{align*}
and the discrete solution $u^n=u_R^n$ if $t_n \leq \iota_R.$ From Proposition \ref{5.2}, for any $R>0$, 
\begin{align*}
\mathbf{E}\max_{n=0,\cdots, N} \|u_R(t_n)-u_R^n\|^{2p}_{H^1}\leq C(R,p)\tau^{2p}.
   \end{align*}
This yields that $\max\limits_{n=0,\cdots, N} \|u_R(t_n)-u_R^n\|_{H^1}$ converges to 0 in probability as $\tau \to 0$. Similar to \cite{BDA06}, by
Chebyshev inequality, we have
\begin{align*}
&\mathbf{P}\big(\max_{n=0,\cdots, N}\|u(t_n)-u^n\|_{H^1}\geq C\tau\big)\\
\leq &\mathbf{P}\big(\max_{n=0,\cdots, N} \|u(t_n)\|_{H^1}\geq R\big)+\mathbf{P}\big(\max_{n=0,\cdots, N} \|u^n\|_{H^1}\geq R\big)\\
&+\mathbf{P}\big(\max_{n=0,\cdots, N}\|u_{R}(t_n)-u_{R}^n\|_{H^1}\geq C\tau\big)\\
\leq &\mathbf{P}\big(\max_{n=0,\cdots, N} \|u(t_n)\|_{H^1}\geq R\big)+\mathbf{P}\big(\max_{n=0,\cdots, N} \|u^n\|_{H^1}\geq R\big)\\
&+\frac{\mathbf{E}\max\limits_{n=0,\cdots, N}\|u_{R}(t_n)-u_{R}^n\|^{2}_{H^1}}{C^2\tau^2}.
\end{align*}
Following from the uniform boundedness of $u^n$ and $u(t_n)$ together with Proposition \ref{5.2}, we have 
$\mathbf{P}\big(\max_{n=0,\cdots, N}\|u(t_n)-u^n\|_{H^1}\geq C\tau\big)$ converges to 0 as $C$ and $R$ tend to $\infty.$ 
\end{proof}
\section{Numerical experiments}

In this section, we focus on the following example
\begin{equation}\label{5.1}
  \begin{split}
& \mathrm{i}d u(t,x)+(\Delta u(t,x)+ \theta x^2 u(t,x)+\lambda|u(t,x)|^{2} u(t,x))dt=\epsilon  d W,\\
    &u(0,x)=\sin(\pi x).
  \end{split}
\end{equation}
Here, $\epsilon$ denotes the size of noise and $\epsilon=0$ can be considered as the deterministic case in some sense.
Next, we first present numerical experiments to verify the convergence order of the proposed stochastic symplectic scheme \eqref{4.4} on $[0,1]\times[0,T]$.
In order to  investigate the influence of quadratic potential  and noise, we  give some  numerical experiment on the solution and evolution laws of charge and energy in the sense of expectation for stochastic multi-symplectic scheme\begin{align}\label{6.10}
   \mathrm{i} (\delta^+_t u^n_{j+\frac{1}{2}}+\delta^+_t u^n_{j-\frac{1}{2}})
     =\nonumber&-2\delta^+_x\delta^-_x u^{n+\frac{1}{2}}_{j}
     -\theta|x_{j+\frac12}|^2 u^{n+\frac{1}{2}}_{j+\frac{1}{2}}
    -\theta|x_{j-\frac12}|^2 u^{n+\frac{1}{2}}_{j-\frac{1}{2}}\\
    -\lambda|u^{n+\frac{1}{2}}_{j+\frac{1}{2}}|^{2\sigma} u^{n+\frac{1}{2}}_{j+\frac{1}{2}}
    & -\lambda|u^{n+\frac{1}{2}}_{j-\frac{1}{2}}|^{2\sigma} u^{n+\frac{1}{2}}_{j-\frac{1}{2}}
 +\frac{\epsilon}{\tau}(\triangle_n W_{j+\frac12}+\triangle_n W_{j-\frac12}).
\end{align}
where
$$\delta^+_x u_j:= \frac{u_{j+1}-u_{j}}{h} ,~\delta^+_t u_n:= \frac{u^{n+1}-u_{n}}{\tau},~\delta^+_x\delta^-_x u_j:= \frac{u_{j+1}-2u_{j}+u_{j-1}}{h^2}.$$
It is obtained by applying mid-point scheme to \eqref{5.1} in both temporal and spatial directions \cite{JWH13}.

Under the homogeneous Dirichlet boundary condition, \eqref{6.10} possesses the 
discrete charge and energy properties deduced by similar  method to \cite{JWH13}, respectively:
\begin{align}\label{3.11}
 h \sum_j |u^{n+1}_{j+\frac{1}{2}}|^2 =h\sum_j |u^{n}_{j+\frac{1}{2}}|^2+h\epsilon\Im  \sum_j \bigg( {\triangle_n W_{j+\frac12}}+{\triangle_n W_{j-\frac12}}\bigg)\overline{u^{n+\frac{1}{2}}_{j}},
\end{align}
and
\begin{align}\label{3.12}
\nonumber  
H^{n+1}-H^{n}=&\frac{\lambda }{2}h\sum_j |u^{n+\frac{1}{2}}_{j+\frac{1}{2}}|^{2}\big(|u^{n+1}_{j+\frac{1}{2}}|^2-|u^{n}_{j+\frac{1}{2}}|^2\big)-\frac{\lambda }{2\sigma+2}h\sum_j\big(|u^{n+1}_{j+\frac{1}{2}}|^{4}-|u^{n}_{j+\frac{1}{2}}|^{4}\big)\\
&
+ h\epsilon \Re \sum_j \big(\triangle_n W_{j+\frac12}+\triangle_n W_{j-\frac12}\big)(\delta^+_t\overline{u^{n}_{j}}),\end{align}
Here, the global energy of \eqref{6.10} at time $t_n$ is defined as
$$H^{n}=\frac{1}{2}h\sum_j|\delta_x^+u_j^n|^2-\frac{\theta}{2} h\sum_j|x_{j+\frac{1}{2}}u^{n}_{j+\frac{1}{2}}|^{2}-\frac{\lambda}{2\sigma+2}h\sum_j |u^{n}_{j+\frac{1}{2}}|^{2\sigma+2}.$$

In the sequel, we choose $\theta=1, \lambda=1$, and consider 
the real-valued  Wiener process $W(t,x) =\sum^{M}_{k=1} \frac{1}{k^{4.6}}e_k(x)\beta_{k} (t)$ with the truncated number $M=50$, where
 $\{\beta_{k}; 1 \leq k \leq M\}$ is a family of independent $\mathbf{R}$--valued
Brownian motions,  and $e_k(x)= \sin(\pi kx),k=1,... ,M$ denotes the orthonormal basis of $L^2(0,1)$.
Let $I_{\tau}=\{t_n; 0 \leq n \leq N\}$ be the uniform discretization of $[0, 1]$ of size $\tau > 0$, 
 and apply the uniform discretization of $[0, 1]$ of size $h = \frac{1}{256}$. The reference values are generated for the smallest mesh size $ \tau_{ref}= 2^{-14}.$  In Fig.4.1, we plot the convergence curves based on the errors $\|u_{ref}-u_{\tau}\|_{L^2}$ with  $ \tau= 2^p \tau_{ref}, p=1, 2, 3, 4, 5.$ We can see that  the convergence order for the $L^2$-error of the mid-point  scheme is 2 if $\epsilon=0$ and the slope of
our scheme \eqref{4.4} in stochastic case is 1. This observation verifies the theoretical result in Section 3.
\begin{figure*}
\includegraphics[width=6cm,height=5cm]{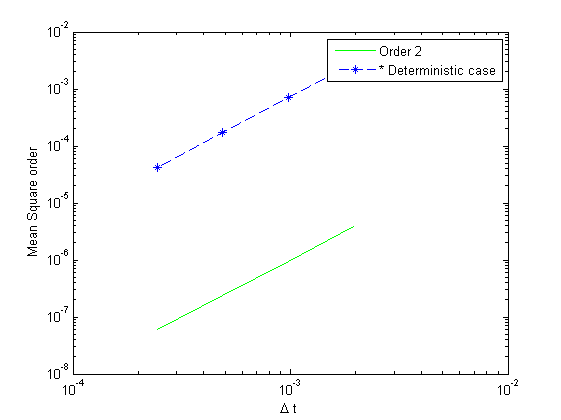}\hfill
\includegraphics[width=6cm,height=5cm]{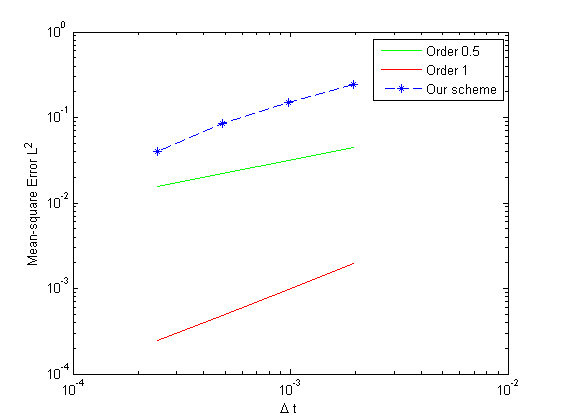}
\caption{Rates of convergence for $\epsilon=0$ (left) and $\epsilon=\sqrt{2}$ (right). }
\label{fig:2}       
\end{figure*}

We now investigate the behaviors of solitary wave solution under the influence of quadratic potential and noise.
In these experiments, we take $\lambda=1, \sigma=1$ and the step sizes $h = 0.1$, $ \tau=0.01$. 
 The profiles of amplitude $|u(t,x)|$ are presented in Fig.4.2 and Fig.4.3. 
In Fig.4.2, these two figures give the propagation of solitary wave when taking different size of noise $\epsilon=0.02,~0.2$ and $\theta=-1$.
We find that the waveform of solution is obviously disturbed as the scale of noise becomes larger, that is the velocity of solitary wave is influenced.
Fig.4.3 presents the long time behaviors of solution when we take the different kind of quadratic potential $\theta=-1,0,1$ with $\epsilon=0.2$.
Combining these three figures, we find that the external potential influences the
 velocity of solitary wave, it can neither prevent the propagation, nor destroy the solitary.  Moreover, it dominates the dynamics of
the solution weaker than noise.

\begin{figure*}
 \includegraphics[width=6cm,height=5cm]
 {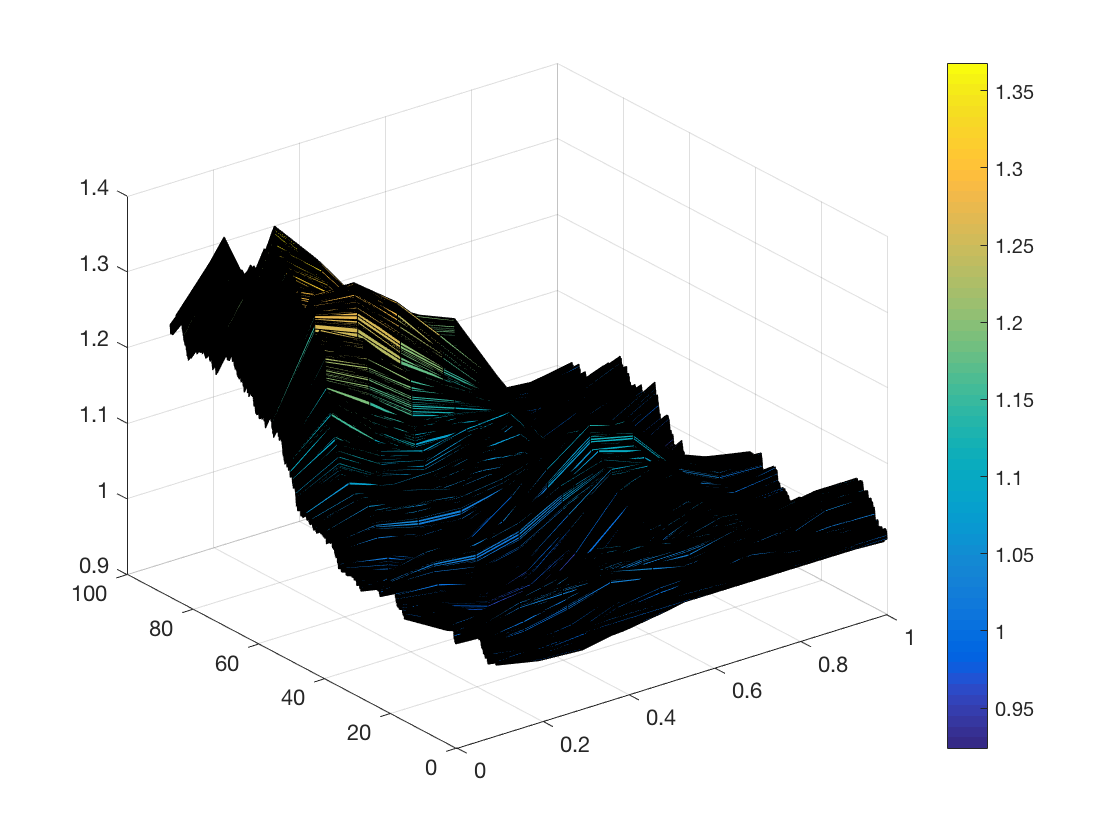}
 \includegraphics[width=6cm,height=5cm]
 {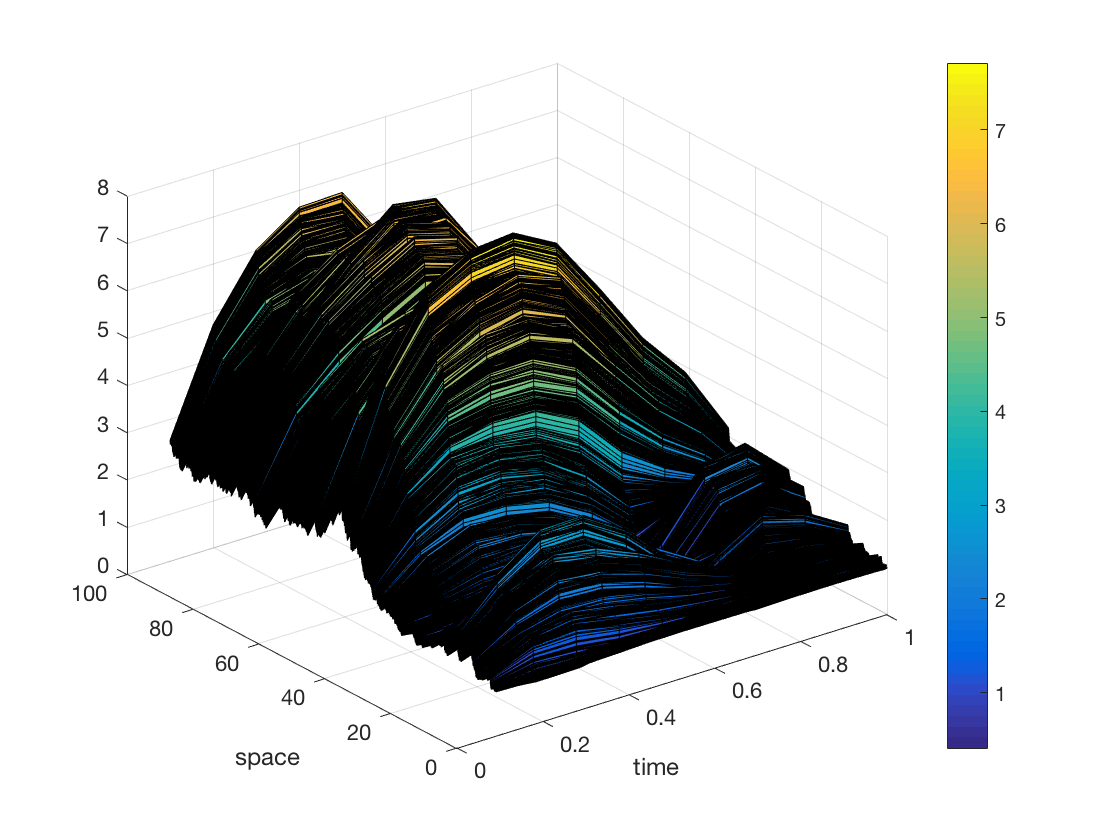}
\caption{The profile of numerical solution $|u(x,t)|$ for one trajectory with different noise when $\theta=-1$. The left figure is the case of $\epsilon=0.02$, The right figure is the case of $\epsilon=0.2.$}
\label{fig:2}       
\end{figure*}

\begin{figure}[h]
   \includegraphics[width=3.8cm,height=3.8cm]{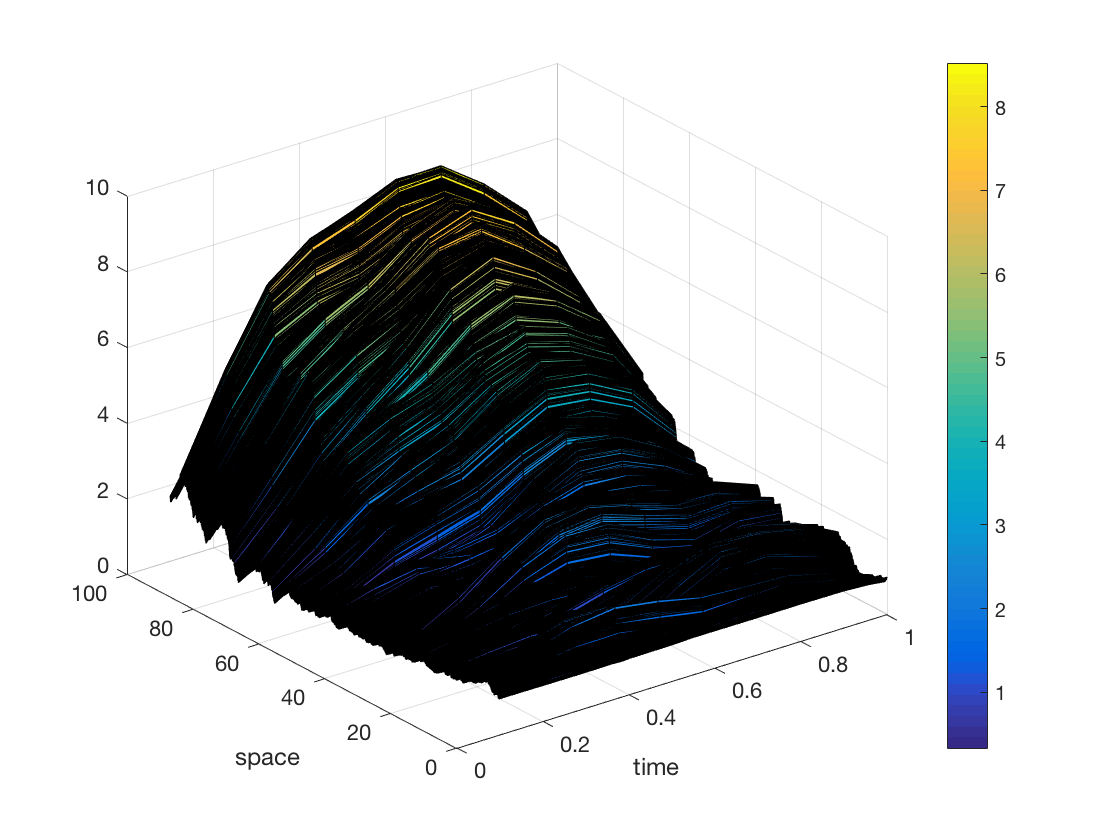}
\includegraphics[width=3.8cm,height=3.8cm]{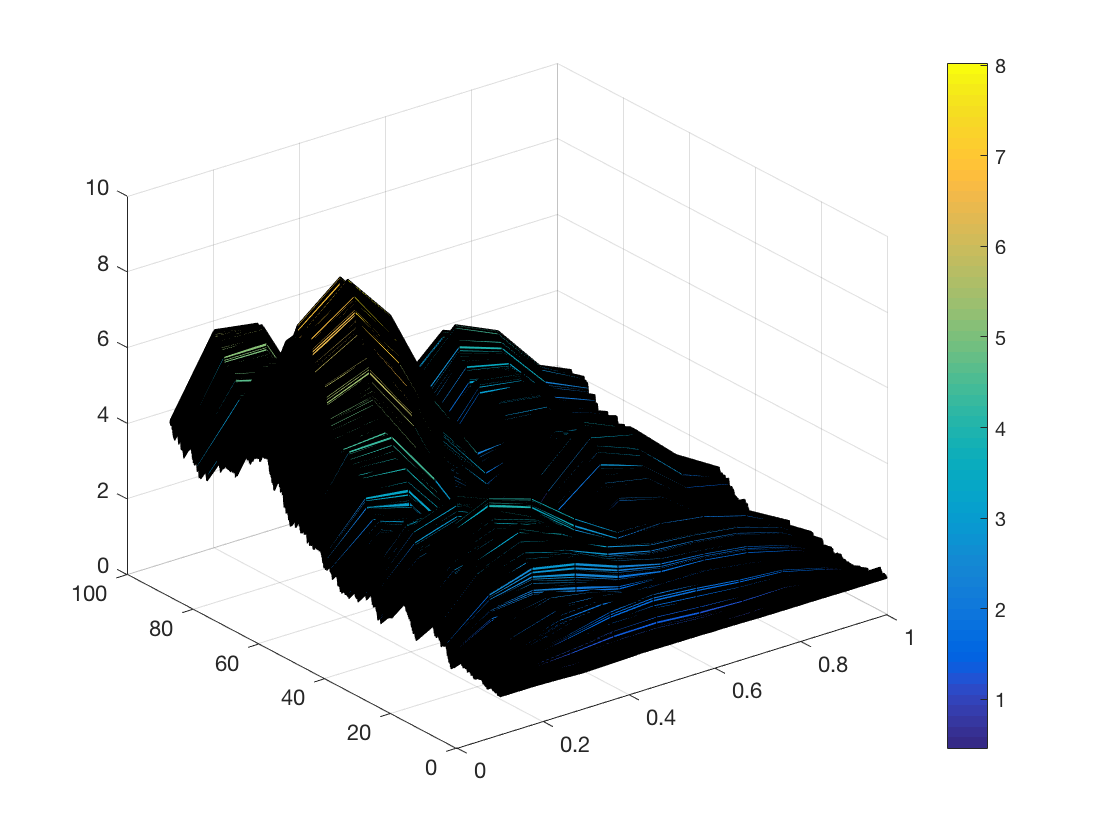}
\includegraphics[width=3.8cm,height=3.8cm]{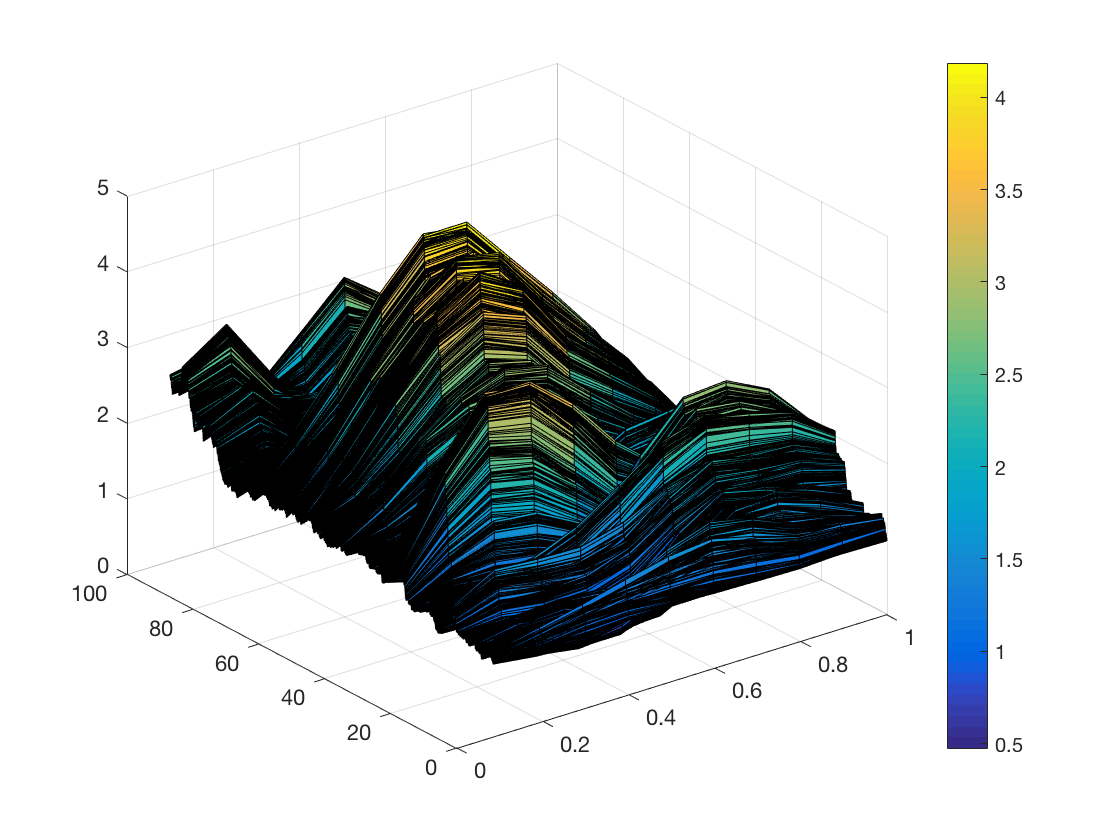}
\caption{The profile of $|u(t,x)|$ when $\theta=-1$ (left),~$0$ (middle),~$1$ (right), respectively.}
\end{figure}

The average charge conservation law $\mathbf{E}(M(u(t,x)))$ follows linearly  grow evolution law with respect to time and the average energy conservation law $\mathbf{E}(H(u(t,x)))$ follows linear evolution law at most. These phenomena are reflected in Fig.4.4 and Fig4.5, respectively, where the evolution of the average discrete charge and energy obey nearly linear growth over 100 trajectories.  In Fig.4.4, the different external potential  have  small effects on the average charge and energy. But different size noises have obvious  effects on them,  the evolution laws of the charge and energy more and more tend to the conservation laws when $\epsilon$ tends to $0$ especially.
\begin{figure}[h]
 \includegraphics[width=6cm,height=5cm]{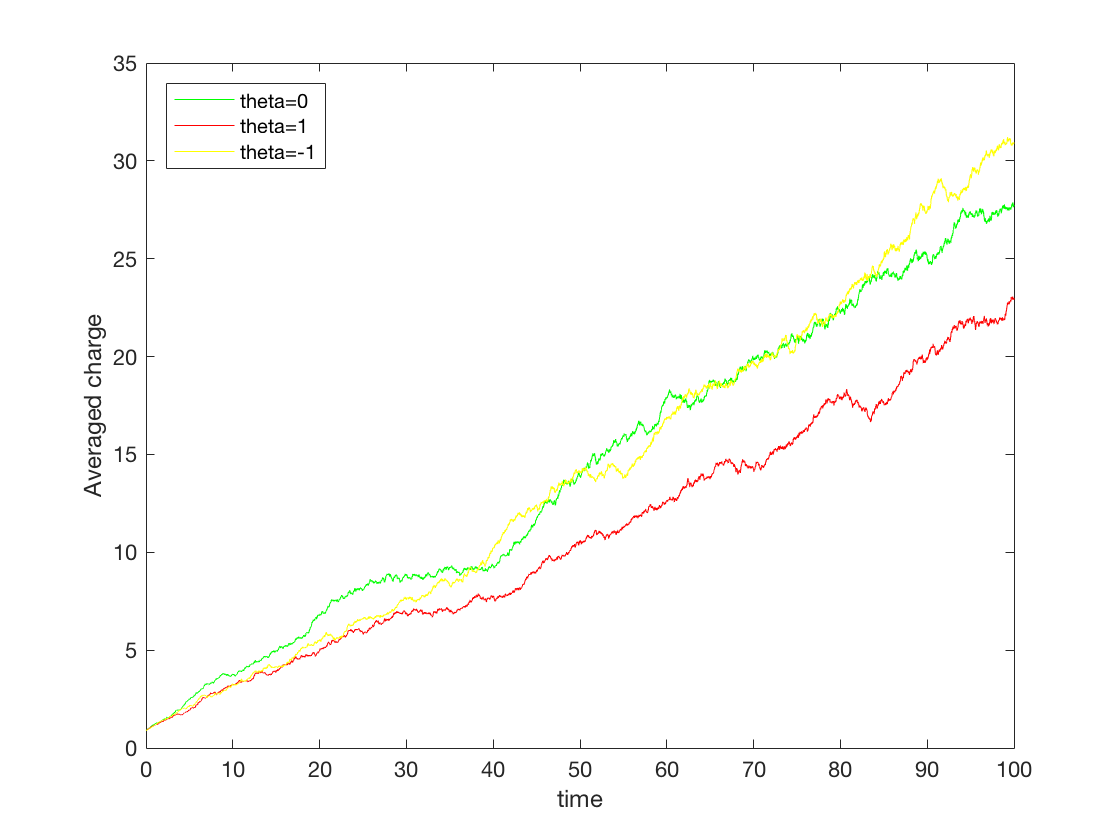}
\includegraphics[width=6cm,height=5cm]
{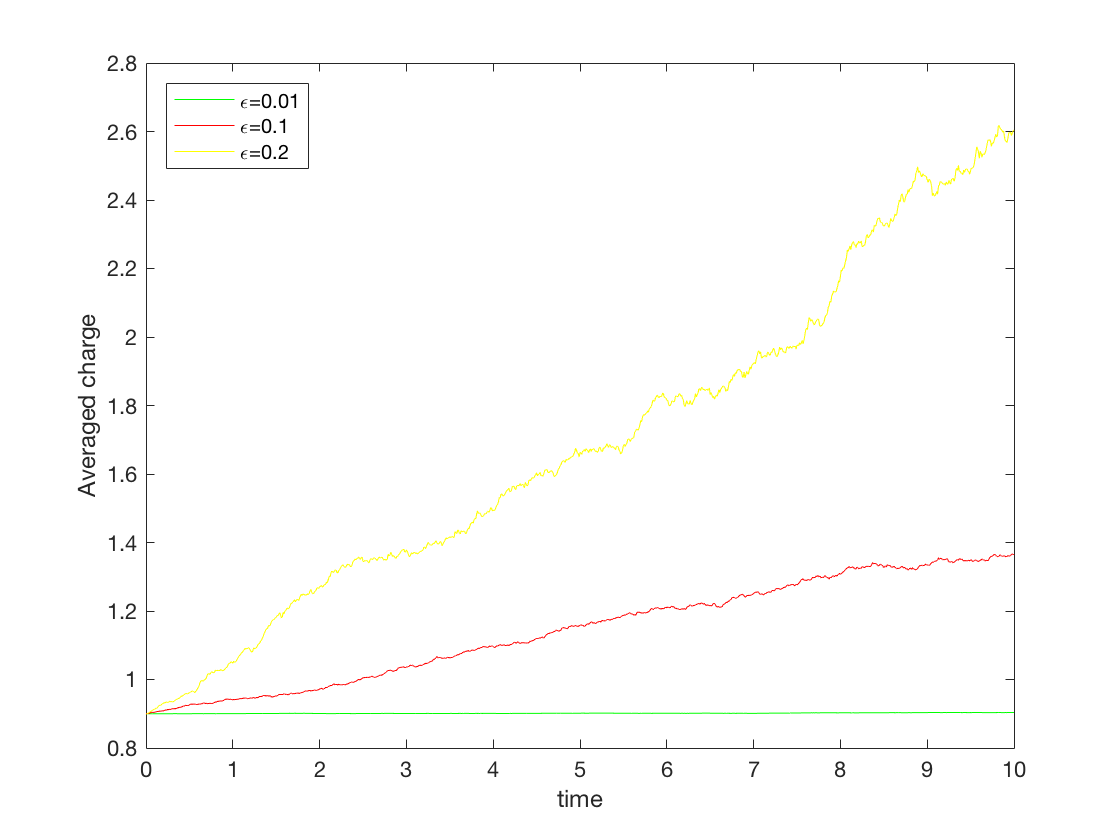}
\caption{The evolution of the average discrete charge obeys linear growth if $\theta=-1, 0, 1, \lambda=1, \sigma=1$ (left);  $\epsilon=0.01, 0.1, 0.2,  \theta=-1, \lambda=1, \sigma=1$ (right).}
\end{figure}

\begin{figure}[h]
\includegraphics[width=6cm,height=5cm]{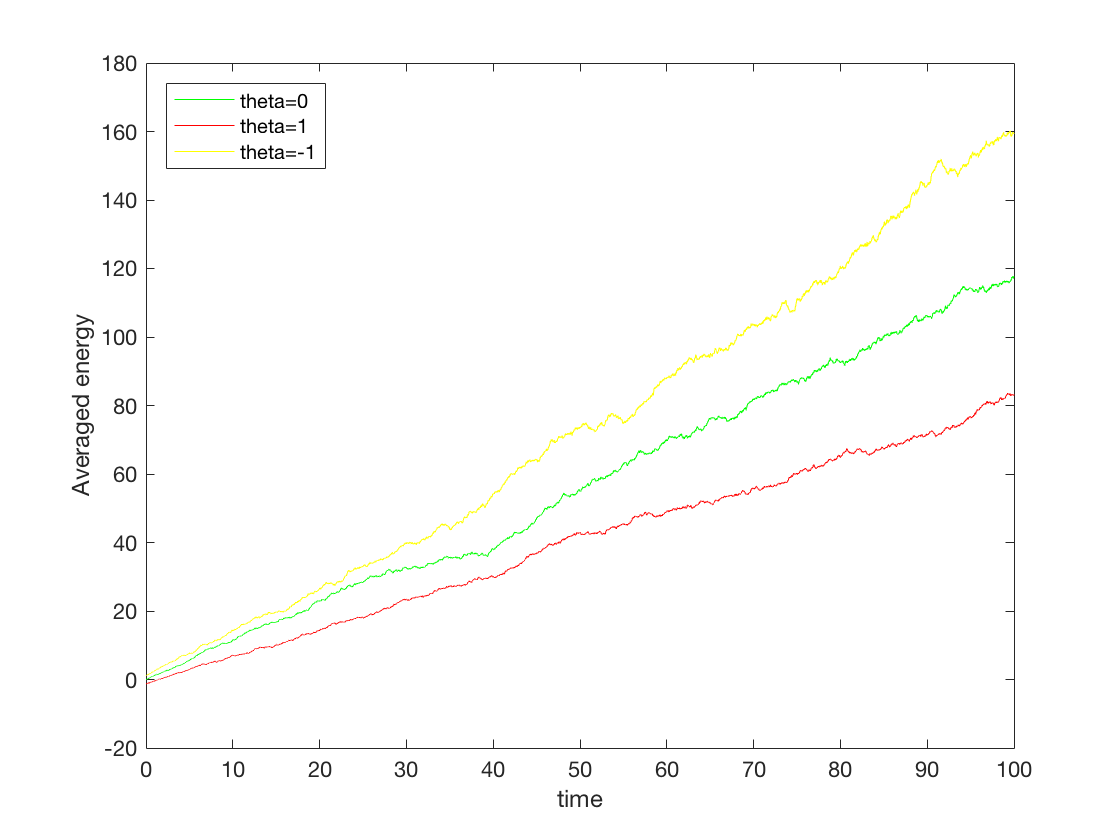}
\includegraphics[width=6cm,height=5cm]
{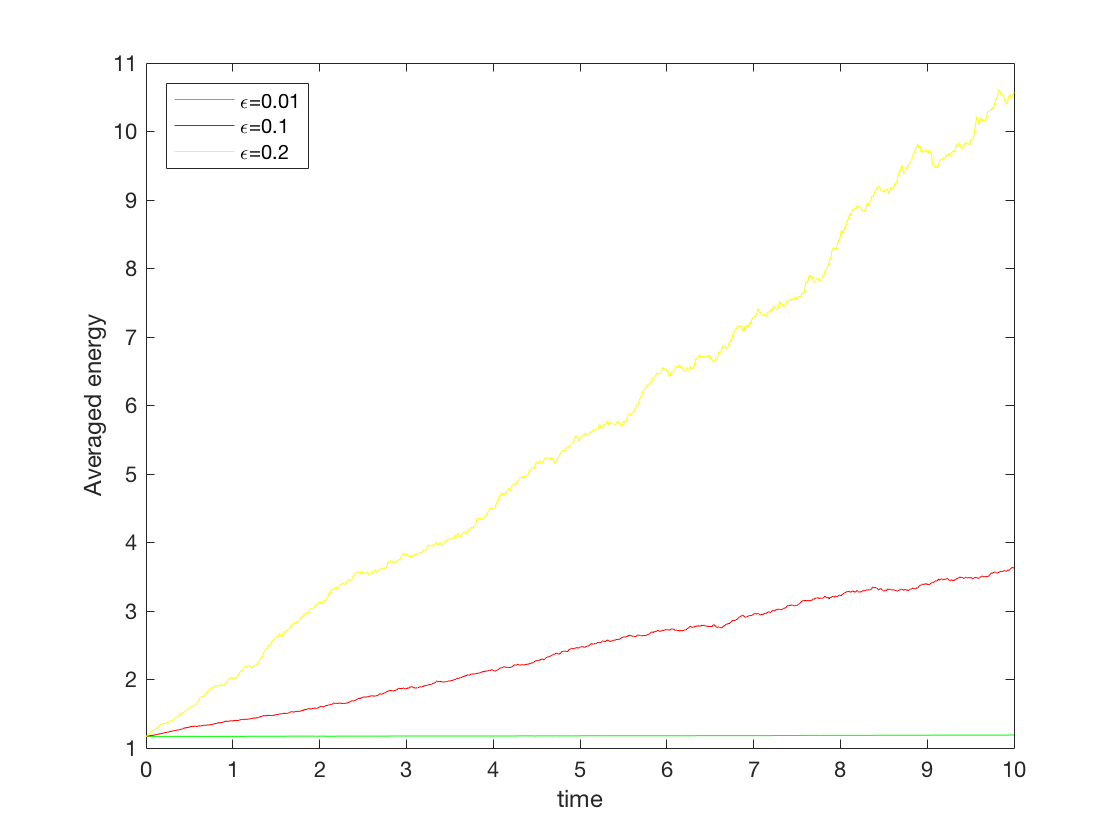}
\caption{ The evolution of the average discrete energy obeys linear growth if $\theta=-1, 0, 1, \lambda=1, \sigma=1$ (left) ;  $\epsilon=0.01, 0.1, 0.2,  \theta=-1, \lambda=1, \sigma=1$ (right).}
\end{figure}

\bibliographystyle{plain}
\bibliography{bib}

\end{document}